\documentclass[12pt]{amsart}

\pdfoutput=1 


\usepackage{amsmath}
\usepackage{amssymb}
\usepackage{amsfonts}
\usepackage{graphicx}
\usepackage{overpic}
\usepackage{fullpage}
\usepackage{color}
\usepackage{enumerate}
\usepackage{thm-restate}
\usepackage{microtype}


\theoremstyle{plain}
\newtheorem{theorem}{Theorem}[section]
\newtheorem{lemma}[theorem]{Lemma}
\newtheorem{proposition}[theorem]{Proposition}
\newtheorem{corollary}[theorem]{Corollary}

\numberwithin{equation}{section}
\numberwithin{figure}{section}

\theoremstyle{definition}
\newtheorem{definition}[theorem]{Definition}
\newtheorem{example}[theorem]{Example}


\newcommand{\G}{\Gamma}

\newcommand{\Z}{\mathbb{Z}}

\newcommand{\CAT}{\operatorname{CAT}}
\newcommand{\cC}{\mathcal{C}}

\newcommand{\cP}{\mathcal{P}}

\newcommand{\s}{\sigma}
\newcommand{\q}{\mathbf{q}}


\newcommand{\cK}{\mathcal{K}}
\newcommand{\cX}{\mathcal{X}}
\newcommand{\cXop}{\mathcal{X}^{\scriptstyle{op}}}
\newcommand{\cY}{\mathcal{Y}}


\newcommand{\bs}{\backslash}
\newcommand{\link}{\operatorname{link}}

\newcommand{\Aut}{\operatorname{Aut}}


\begin{document}

\title{Maximal torsion-free subgroups of certain lattices of hyperbolic buildings and Davis complexes}

\author{William Norledge \and Anne Thomas \and Alina Vdovina}

\begin{abstract}
We give an explicit construction of a maximal torsion-free finite-index subgroup of a certain type of Coxeter group. The subgroup is constructed as the fundamental group of a finite and non-positively curved polygonal complex. First we consider the special case where the universal cover of this polygonal complex is a hyperbolic building, and we construct finite-index embeddings of the fundamental group into certain cocompact lattices of the building. We show that in this special case the fundamental group is an amalgam of surface groups over free groups. We then consider the general case, and construct a finite-index embedding of the fundamental group into the Coxeter group whose Davis complex is the universal cover of the polygonal complex. All of the groups which we embed have minimal index among torsion-free subgroups, and therefore are maximal among torsion-free subgroups.
\end{abstract}

\maketitle

\section{Introduction}\label{sec:Intro}

In the study of locally compact groups, the theory of lattices in the automorphism groups of connected, simply-connected, locally finite polyhedral complexes is a natural extension of the theory of lattices in algebraic groups. By the work of Bruhat-Tits, Ihara, Serre and others, algebraic groups over non-archimedean local fields can be realized as groups of automorphisms of Bruhat-Tits buildings. These buildings can be viewed as certain highly symmetric piecewise Euclidean (simplicial) polyhedral complexes which satisfy the $\CAT(0)$ condition.   

Given a connected, simply-connected, locally finite polyhedral complex $X$, we define the full automorphism group $G=\Aut(X)$ to be the group of cellular isometries of $X$. The group $G$ is a locally compact group in the compact-open topology (equivalently the pointwise convergence topology) which acts properly on $X$ (see~\cite[Chapter I]{KN}).

A subgroup of $G$ is discrete if and only if it acts on $X$ with finite cell stabilizers. A discrete subgroup $\G<G$ is called a \emph{lattice} if $\G\bs G$ has finite Haar measure, and a lattice is \emph{cocompact} if $\G\bs G$ is compact. Note that it may happen that $G$ is discrete, in which case the theory of its lattices is trivial. Lattices in the automorphism groups of locally finite trees (the 1-dimensional case) are called ``tree lattices" and have been widely studied (see the book of Bass-Lubotzky ~\cite{BL}). We refer the reader to ~\cite{FHT} for a recent survey of what is known in higher dimensions. In this paper we study polygonal complexes (the 2-dimensional case). Using covering theory of complexes of groups, we construct minimal index embeddings of a class of torsion-free groups into cocompact lattices of polygonal complexes in two different settings: hyperbolic buildings and Davis complexes.

In our first setting (see Section \ref{sec:CoveringGamma}) we let $L$ be the complete bipartite graph $K_{q_1,q_2}$ on $q_1+q_2$ vertices, where $q_1,q_2\geq 2$. For $m\geq 2$, let $I_{2m,L}$ denote the Bourdon building which is the unique simply connected polygonal complex such that all faces are regular right-angled $2m$-gons and the link at each vertex is $L$. Following Bourdon in~\cite{Bourdon97}, we present this building as the universal cover of a polygon of groups $G(\cP)$ whose fundamental group $\G=\pi_1(G(\cP))$ is a cocompact lattice in $\Aut(I_{2m,L})$. We then construct a covering of the polygon of groups $G(\cP)$ by a certain polygonal complex $X=X_{2m,L}$. We show this covering has $q_1q_2$ sheets, and hence induces an embedding of the torsion-free group $H=\pi_1(X)$ in $\G$ with index $q_1q_2$. By considering torsion in the lattice $\G$, we show that $H$ is a maximal torsion-free subgroup of $\G$.

\begin{restatable}{theorem}{Embeddingingamma}\label{thm:Embeddingingamma}
When $L = K_{q_1,q_2}$ with $q_1,q_2\geq 2$, the group $H$ is an index $q_1q_2$ subgroup of $\G$. Moreover, $H$ is a maximal torsion-free subgroup of $\G$ with minimal index.
\end{restatable}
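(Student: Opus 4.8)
The plan is to establish the theorem in two parts: first compute the index of $H$ in $\G$ via covering theory, and then prove maximality by analyzing torsion in $\G$.

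For the index computation, I would use the construction of $X = X_{2m,L}$ as a covering of the polygon of groups $G(\cP)$. The key fact from covering theory of complexes of groups (as developed by Bridson-Haefliger and Lim-Thomas) is that if $X$ covers $G(\cP)$ with $n$ sheets, then $\pi_1(X)$ embeds in $\pi_1(G(\cP)) = \G$ as a subgroup of index $n$. So the heart of this part is verifying that the covering has exactly $q_1 q_2$ sheets. I would compute the number of sheets by counting preimages of cells: the polygon of groups $G(\cP)$ has a single face, edges, and vertices with prescribed finite local groups coming from the cyclic and dihedral symmetries of the right-angled $2m$-gon, and I would check that the face, edge, and vertex stabilizers in $\G$ have orders whose interplay forces precisely $q_1 q_2$ lifts. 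Concretely, since $H = \pi_1(X)$ is torsion-free and acts freely on $I_{2m,L}$, while $\G$ has cell stabilizers of known finite orders, the index equals the ``total mass'' ratio, and I expect the bipartite structure $K_{q_1,q_2}$ to contribute the factors $q_1$ and $q_2$ at the two vertex types.

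For maximality, the strategy is to show that $q_1 q_2$ is the \emph{minimal} possible index of any torsion-free finite-index subgroup of $\G$; since $H$ achieves this minimum, no torsion-free subgroup can properly contain it (a proper containment would give strictly smaller index, forcing $H$ itself to have torsion, a contradiction). To bound the minimal index from below, I would examine the conjugacy classes of finite-order elements in $\G$. Any torsion-free subgroup $K < \G$ must intersect every finite subgroup trivially, so in the quotient action the nontrivial cell stabilizers inject into the coset space; this yields a divisibility constraint forcing the index to be divisible by the relevant stabilizer orders, and more carefully a counting argument (counting how cosets are permuted by a maximal finite subgroup, which here are the dihedral vertex groups) should show the index is at least $q_1 q_2$. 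The link structure $L = K_{q_1,q_2}$ is what pins down these stabilizer orders precisely.

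The main obstacle I anticipate is the lower bound in the maximality argument, that is, proving no torsion-free subgroup has index smaller than $q_1 q_2$. The index computation is essentially bookkeeping with the covering theory machinery, but ruling out smaller torsion-free subgroups requires a genuine understanding of how the finite subgroups of $\G$ sit inside it and interact under conjugacy. I would approach this by identifying a maximal finite subgroup (the dihedral group stabilizing a vertex, or the cyclic group stabilizing a face) and showing that a torsion-free subgroup must act freely on the cosets of $\G$ by this finite subgroup in a way that forces the index to be a multiple of $q_1 q_2$. Combining this lower bound with the exact index computation then gives both minimality and maximality simultaneously.
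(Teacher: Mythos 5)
Your overall skeleton matches the paper's: compute the index by exhibiting a $q_1q_2$-sheeted covering of complexes of groups and invoking functoriality (Theorem~\ref{thm:Coverings}), then get maximality from a lower bound of $q_1q_2$ on the index of any torsion-free finite-index subgroup. However, there is a genuine error in your description of $G(\cP)$ that both halves of your argument lean on. The local groups of $G(\cP)$ are \emph{not} ``cyclic and dihedral symmetries of the right-angled $2m$-gon'': as in Example~\ref{egs:ComplexesGroups}(2), the local group at the barycenter of the face is trivial, the edge midpoints carry groups $G_1$ and $G_2$ of orders $q_1$ and $q_2$ (alternating around the polygon), and the vertices carry the direct product $G_1\times G_2$. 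Consequently $\G$ acts on $I_{2m,L}$ with \emph{trivial} face stabilizers --- there is no ``cyclic group stabilizing a face'' inside $\G$ --- and the vertex stabilizers are conjugates of $G_1\times G_2$, which is not dihedral in general; dihedral vertex groups belong to the paper's other setting, the Coxeter group $W_{2m,L}$ acting on the Davis complex. Since both your sheet count and your divisibility bound are driven by these stabilizer orders, taking them to be symmetry groups of the polygon would produce numbers involving $m$ rather than $q_1q_2$, and the argument would not close.

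The second gap is that you treat the existence of the covering $\Phi\colon H(\cX)\to G(\cP)$ as given and call the rest ``bookkeeping,'' whereas this construction is the technical heart of the proof (Proposition~\ref{prop:CoveringGamma}). A covering of complexes of groups is more than the cellular projection $X\to P$: by Definition~\ref{def:Covering} one must choose, for every edge $a$ of $\cX$, an element $\phi(a)$ of the local group at $t(p(a))$ satisfying the compatibility condition on composable edges, and such that the induced maps onto cosets $G_{t(b)}/\psi_b(G_{i(b)})$ are bijections. This is precisely where $L=K_{q_1,q_2}$ enters: the faces of $X$ meeting a given vertex are indexed by pairs $(i,j)$, matching an enumeration $\{g_{1,i}g_{2,j}\}$ of $G_1\times G_2$, which is what makes the coset-bijection condition verifiable; the sheet count is then immediate because the face of $P$ has trivial local group and exactly $q_1q_2$ preimages. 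On the positive side, your lower-bound argument for maximality is correct once the right finite subgroup is used: a finite subgroup $F\leq\G$ acts freely by left translation on $\G/K$ for any torsion-free $K$, so $|F|$ divides $[\G:K]$. Taking $F=G_1\times G_2$ (which embeds in $\G$ by developability of $G(\cP)$) gives divisibility by $q_1q_2$, which is in fact slightly stronger than the inequality the paper obtains via Theorem~\ref{thm:Subgroups} and Corollaries~\ref{cor:Sheets}--\ref{cor:Maximality}, and is a more elementary route to the same conclusion.
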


In our second setting (see Section \ref{sec:CoveringW}) we allow $L$ to be any connected bipartite graph and correspondingly generalize the construction of $X=X_{2m,L}$. Let $W$ denote the Coxeter system which is determined by $L$ as follows: the generators are the vertices, and the product of a pair of generators has order $1$ if they equal, $m$ if they are adjacent, and $\infty$ otherwise. We present $W$ as the fundamental group of a complex of groups $G(\cK)$ whose universal cover is the 2-dimensional Davis complex $\Sigma$ of $W$. The Coxeter group $W$ is naturally a cocompact lattice in $\Aut(\Sigma)$. We construct a covering of $G(\cK)$ by $X$ with $2m$ sheets and show $H$ is a maximal torsion-free subgroup of $W$.  

\begin{restatable}{theorem}{EmbeddinginW}\label{thm:EmbeddinginW}
The group $H$ is an index $2m$ subgroup of $W$.  Moreover, $H$ is a maximal torsion-free subgroup of $W$ with minimal index.
\end{restatable}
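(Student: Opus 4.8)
The plan is to follow the strategy used for Theorem~\ref{thm:Embeddingingamma}, with the count $q_1q_2$ replaced by $2m$. First I would record the index. By the covering theory of complexes of groups, a covering of degree $n$ induces an injection of fundamental groups whose image has index $n$. Since the covering $X\to G(\cK)$ constructed in Section~\ref{sec:CoveringW} has $2m$ sheets and $X$ is an honest polygonal complex (all local groups trivial), the induced map $H=\pi_1(X)\hookrightarrow \pi_1(G(\cK))=W$ is injective with image of index $2m$. Moreover $H$ is torsion-free: as the fundamental group of the finite non-positively curved complex $X$, it acts freely on the universal cover $\Sigma=\widetilde{X}$, which is $\CAT(0)$, so no nontrivial finite subgroup can occur, since a finite group acting on a complete $\CAT(0)$ space fixes a point.

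It remains to show that $2m$ is the minimal index of a torsion-free subgroup, and for the lower bound I would exploit the torsion of $W$. If $\G<W$ is torsion-free of finite index and $F<W$ is any finite subgroup, then $F$ acts by left translation on the coset space $W/\G$, and the stabilizer of a coset $w\G$ is $F\cap w\G w^{-1}$, a finite subgroup of the torsion-free group $w\G w^{-1}$, hence trivial. Thus the action is free, every $F$-orbit has size $|F|$, and $|F|\mid [W:\G]$; in particular $[W:\G]\geq |F|$. Hence the minimal index of a torsion-free subgroup is at least the maximal order of a finite subgroup of $W$.

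To compute this maximal order, I would invoke the classification of finite subgroups of a Coxeter group: every finite subgroup is conjugate into a spherical special subgroup $W_T$, that is, one generated by a subset $T$ of the generators with $W_T$ finite. Because $L$ is bipartite it contains no triangle, so among any three generators at least one pair is non-adjacent; the corresponding Coxeter label is $\infty$, which forces $W_T$ to be infinite whenever $|T|\geq 3$. Thus the spherical subsets have rank at most $2$, and the largest finite special subgroups are the dihedral groups of order $2m$ arising from the edges of $L$. Therefore the maximal order of a finite subgroup of $W$ is exactly $2m$, and the bound above becomes $[W:\G]\geq 2m$ for every torsion-free $\G$.

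Combining the two halves, $H$ is a torsion-free subgroup whose index $2m$ equals the minimal possible value, so $H$ has minimal index among torsion-free subgroups. Maximality is then formal: if $H\leq H'$ with $H'$ torsion-free, then $H'$ has finite index in $W$ with $[W:H']\leq [W:H]=2m$, while the lower bound gives $[W:H']\geq 2m$; hence $[W:H']=[W:H]$ and $H'=H$. The main obstacle is the computation of the maximal finite subgroup, namely marrying the Coxeter-theoretic classification of finite subgroups with the combinatorics of $L$ to rule out spherical subsets of rank $\geq 3$ and to confirm that the dihedral groups of order $2m$ are genuinely realized; everything else is bookkeeping with the covering degree already established in Section~\ref{sec:CoveringW}.
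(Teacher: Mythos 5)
Your proof is correct, and its first half --- extracting the index-$2m$ embedding from the $2m$-sheeted covering $\Psi:H(\cXop)\to G(\cK)$ of Proposition~\ref{prop:CoveringW} via Theorem~\ref{thm:Coverings} --- is exactly the paper's argument. Where you genuinely diverge is the lower bound. The paper never leaves the covering-theoretic machinery: by Theorem~\ref{thm:Subgroups}, any torsion-free index-$n$ subgroup of $W$ produces an $n$-sheeted covering of $G(\cK)$ by a trivial complex of groups, and the sheet-count formula forces $n \geq |G_\tau|$ for every local group (Corollary~\ref{cor:Sheets}); applied to the dihedral local group of order $2m$ this gives Corollary~\ref{cor:Maximality}(2). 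You instead argue classically: a finite subgroup $F<W$ acts freely by left translation on the cosets of any torsion-free subgroup, so the index is at least $|F|$, and you then pin down the maximal finite subgroup order as $2m$ using the theorem that finite subgroups of Coxeter groups are conjugate into spherical special subgroups, with bipartiteness of $L$ excluding spherical subsets of rank at least $3$. Both routes are valid, but note two things. First, you prove more than you need: for the bound it suffices to exhibit \emph{one} finite subgroup of order $2m$, namely the special subgroup $W_{\{x_i,y_j\}}$, which embeds in $W$ by standard Coxeter theory (or by developability of $G(\cK)$, as the paper would say); the classification result and the rank-$\geq 3$ exclusion are then unnecessary. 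Second, the Coxeter-theoretic classification you invoke is itself usually proved by letting the finite subgroup act on the CAT(0) Davis complex and taking a fixed point, so your ``elementary'' route quietly runs through the same geometry the paper packages into Theorem~\ref{thm:Subgroups}; the paper's formulation has the additional advantage of treating Theorem~\ref{thm:Embeddingingamma} and Theorem~\ref{thm:EmbeddinginW} uniformly, with no facts specific to Coxeter groups. Your explicit verification that $H$ is torsion-free (free action on a complete CAT(0) universal cover) and your closing deduction that minimal index among torsion-free subgroups implies maximality both fill in steps the paper leaves implicit, and they are correct.
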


A pair of groups are called \emph{commensurable} if they contain finite-index subgroups which are abstractly isomorphic. It is a result of Haglund in~\cite{Haglund} that any two cocompact lattices in $\Aut(I_{2m,L})$, for $m\geq 3$, are commensurable. When $L=K_{q_1,q_2}$, the Davis complex $\Sigma$ can be viewed as the barycentric subdivision of the building $I_{2m,L}$ (see Corollary \ref{daviscomplexisbuilding}). Hence $\G$ and $W$ are both cocompact lattices in $\Aut(I_{2m,L})$, and the subgroup $H$ witnesses their commensurability. 

Finally (see Section~\ref{sec:SurfaceGroups}) we prove that in the first setting we have the following:

\begin{restatable}{theorem}{Amalgam}\label{thm:Amalgam}
When $L = K_{q_1,q_2}$ with $q_1,q_2\geq 2$, the group $H$ is an amalgam of $(q_1 - 1)(q_2 - 1)$ many genus $(m-1)$ surface groups over rank $(m-1)$ free groups. 
\end{restatable}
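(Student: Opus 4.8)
The plan is to analyze the polygonal complex $X = X_{2m,L}$ directly when $L = K_{q_1,q_2}$ and to extract from its structure both the surface subgroups and the free groups over which they amalgamate. Since $H = \pi_1(X)$ and $X$ is a finite non-positively curved polygonal complex, I would begin by understanding the combinatorial structure of $X$ explicitly: its faces are $2m$-gons, and the covering $X \to G(\cP)$ has $q_1 q_2$ sheets, so I expect $X$ to have a controlled number of faces and vertices that can be read off from the data $(q_1, q_2, m)$. The guiding idea is that a polygonal complex built from right-angled $2m$-gons with bipartite links decomposes along a natural subcomplex (a graph) into pieces that are themselves surfaces.

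First I would identify the surface subcomplexes. I expect $X$ to contain $(q_1-1)(q_2-1)$ closed subsurfaces, each a union of $2m$-gons glued along edges so as to form a closed orientable surface, and I would compute the genus of each via an Euler characteristic count: for a surface tiled by right-angled $2m$-gons meeting $L$-adically at vertices, the relation $\chi = V - E + F$ together with the vertex, edge, and face counts should yield $\chi = 2 - 2(m-1)$, i.e. genus $m-1$. Each such closed surface $S_i$ has fundamental group a genus-$(m-1)$ surface group, and the inclusion $S_i \hookrightarrow X$ being $\pi_1$-injective will follow from the fact that these subcomplexes are locally convex (in the CAT(0) metric on the universal cover), so that the embedding lifts to a convex, hence $\pi_1$-injective, embedding. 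This local convexity is where the non-positive curvature hypothesis does real work.

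Next I would identify the free groups. The surfaces should intersect pairwise (or be glued) along a graph $Y \subset X$ whose components are the ``spines'' along which the $2m$-gons are shared; I expect each such graph to be a wedge of circles, giving a free group, and a rank count via $\operatorname{rank} = 1 - \chi(Y)$ should produce rank $m-1$. To assemble these pieces into the amalgam statement, I would invoke Bass--Serre theory / the theory of graphs of groups: the decomposition of $X$ into the surface pieces glued along the graph pieces gives $X$ the structure of a graph of spaces, and $\pi_1(X)$ is then the fundamental group of the corresponding graph of groups, with vertex groups the $(q_1-1)(q_2-1)$ surface groups and edge groups the free groups of rank $m-1$. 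The word ``amalgam'' signals that the underlying graph should be a tree (or at least that the decomposition is an iterated amalgamated free product rather than involving HNN extensions), so I would need to check that the incidence pattern of surfaces and gluing graphs is tree-like.

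The main obstacle I anticipate is the bookkeeping that establishes exactly $(q_1-1)(q_2-1)$ surfaces and exactly the right gluing pattern: one must pin down how the $q_1 q_2$ sheets of the cover organize the faces into surface pieces and show that the intersection graph has precisely the combinatorics yielding an amalgam over the free groups, with no leftover faces or cycles in the underlying graph. The genus and rank computations are essentially Euler-characteristic arithmetic and I expect them to be routine once the decomposition is fixed; by contrast, verifying that each surface piece is genuinely closed, that the gluing loci are wedges of $m-1$ circles, and that $\pi_1$-injectivity holds simultaneously for all pieces (so that the graph-of-groups decomposition is faithful) is the delicate combinatorial-geometric heart of the argument.
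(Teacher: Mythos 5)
There is a genuine gap at the very first step: the decomposition you propose does not exist inside $X$, and this is not deferred bookkeeping but an impossibility. In $X = X_{2m,K_{q_1,q_2}}$ each edge $x_i^k$ lies in exactly $q_2$ faces (the faces $P_{ij}$ for $j = 1,\dots,q_2$) and each edge $y_j^k$ lies in exactly $q_1$ faces. If $S \subseteq X$ is a subcomplex which is a closed surface, then every edge of $S$ lies in exactly two faces of $S$, so the set of faces of $S$ corresponds (via $P_{ij} \leftrightarrow (x_i,y_j)$) to a subgraph of $K_{q_1,q_2}$ in which every vertex that occurs has degree exactly $2$, i.e.\ a disjoint union of cycles; since cycles in a bipartite graph have length at least $4$, every closed subsurface of $X$ contains at least $4$ faces. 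Your picture of $(q_1-1)(q_2-1)$ closed subsurfaces meeting only along graphs forces the $q_1q_2$ faces of $X$ to be distributed among the surfaces with no face shared, hence $4(q_1-1)(q_2-1) \leq q_1 q_2$, which fails for every $(q_1,q_2) \neq (2,2)$. So the genus-$(m-1)$ surfaces of the theorem are simply not subcomplexes of $X$; they only emerge after a homotopy equivalence. A second misconception: the incidence pattern is \emph{not} tree-like, and cannot be made so. The paper's amalgam is the direct limit (colimit) of a diagram over the scwol of $K_{q_1-1,q_2-1}$, which contains cycles as soon as $q_1,q_2 \geq 3$; the reason no HNN letters appear is not tree-likeness but the fact that, in the geometric model, all pieces share a common basepoint, so van Kampen yields the colimit of the diagram rather than a Bass--Serre fundamental group with stable letters. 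Note also that the theorem, as stated and proved, is exactly this colimit statement; no $\pi_1$-injectivity of the pieces is needed, so the CAT(0)/local convexity machinery you invoke is doing no work.

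The paper's actual route is presentation-theoretic and elementary. One contracts the single face $P_{q_1 q_2}$ of $X$ to a point, obtaining a homotopy equivalent complex with one vertex; reading off relators gives generators $x_i^k, y_j^k$ with relations $x_i^1 y_j^1 \cdots x_i^m y_j^m = 1$, together with $x_i^1 \cdots x_i^m = 1$ and $y_j^1 \cdots y_j^m = 1$ coming from the faces that shared edges with the contracted one. Eliminating $x_i^m$ and $y_j^m$ leaves the $(q_1-1)(q_2-1)$ relators of Proposition~\ref{prop:pres}; each is recognized as a genus-$(m-1)$ surface relator (Lemma~\ref{lem:surface}), and $H$ is then identified, by direct comparison of presentations, with the colimit of the diagram that places these surface groups on the edges of $K_{q_1-1,q_2-1}$ and the free groups $F_{m-1}\langle x_i \rangle$, $F_{m-1}\langle y_j \rangle$ on its vertices. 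If you want an argument closer to your geometric intuition, it can be repaired, but only by allowing the pieces to overlap in faces: the subcomplexes $S_{ij} := P_{ij} \cup P_{iq_2} \cup P_{q_1 j} \cup P_{q_1 q_2}$, for $1 \leq i \leq q_1 - 1$ and $1 \leq j \leq q_2 - 1$, \emph{are} closed orientable surfaces of genus $m-1$ (Euler characteristic $2m - 4m + 4 = 2 - 2(m-1)$), there are exactly $(q_1-1)(q_2-1)$ of them, they cover $X$, all contain the face $P_{q_1 q_2}$ (hence a common basepoint), and their pairwise intersections are either the disk $P_{q_1q_2}$ or $2$-complexes such as $P_{iq_2} \cup P_{q_1q_2} \cup \{x_i^1,\dots,x_i^m\}$, which deformation retract to wedges of $m-1$ circles. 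Van Kampen applied to this cover gives the colimit, i.e.\ the amalgam. The essential point your proposal misses is that the gluing loci are necessarily $2$-dimensional, which is precisely what a decomposition into face-disjoint surfaces meeting along graphs cannot accommodate.
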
 

In Sections \ref{sec:RAAGs} and \ref{sec:Geoamalgams} we explain how these amalgams generalize both the way that certain right-angled Artin groups can be recognized as amalgams of the free abelian group of rank 2 over infinite cyclic groups, and the geometric amalgams of free groups studied by Lafont in~\cite{Lafont}.

We begin with Section \ref{sec:Preliminaries}, where we collect some preliminary material. In Section \ref{sec:ComplexesLattices} we give a brief exposition of the theory of complexes of groups, which has been tailored to suit our needs. This section also includes some important constructions which are introduced via a sequence of examples. Our main results are proved in Sections \ref{sec:Construction} and \ref{sec:Amalgam}. 

\subsection*{Acknowledgements}

The second author thanks the University of Newcastle (UK) for hosting a visit in December 2015.  The second and third authors were guests of the Forschungsinstitut f\"ur Mathematik (FIM) at ETH Z\"urich in Spring 2016 and we thank the FIM for providing us with a wonderful research environment.

\section{Preliminaries}\label{sec:Preliminaries}

We begin by collecting the basic definitions and results relevant to this paper. We briefly recall polygonal complexes, Bourdon buildings and the Davis complex of a Coxeter group. 

\subsection{Polygonal Complexes}\label{sec:Complexes}

A CW complex is called \emph{regular} if the attaching maps are injective. One of the main attractions of regular CW complexes is the fact that they are rigid with respect to their set of closed cells ordered by inclusion (see~\cite{Bjorner}). We say a regular CW complex has the \emph{intersection property} if the intersection of any two closed cells is either empty or exactly one cell. Equivalently the ordered set of cells has the property that if two cells are bounded below, they have a greatest lower bound.

\begin{definition}\label{def:Polygonalcomplex}
A \emph{polygonal complex} is a connected $2$-dimensional regular CW complex with the intersection property. 
\end{definition}

The prototypical example of a polygonal complex is a connected 2-dimensional simplicial complex. Since we are restricting ourselves to two dimensions, let us adopt the following terminology; we call $0$-cells \emph{vertices}, $1$-cells \emph{edges}, and $2$-cells \emph{faces}. We associate to each vertex $\s$ of a polygonal complex a simplicial graph called its \emph{link}, which we denote by $\link(\s)$. It is the graph whose vertices are edges which intersect $\s$, and whose edges are faces which intersect $\s$. We call a polygonal complex \emph{locally finite} if each of its links is a finite graph. 

The boundary of each face in a polygonal complex is a cycle of at least three edges. Hence faces may be regarded as abstract polygons with at least three sides. Let $k\geq 3$ and $L$ be a finite connected simplicial graph. A \emph{$(k,L)$-complex} is a polygonal complex whose faces are all $k$-gons and whose links are all isomorphic to $L$. A crucial question is the uniqueness of simply-connected $(k,L)$-complexes with respect to a fixed pair $(k,L)$, i.e. to what extent does this local structure determines global structure? In general we don't have uniqueness. For example, for the case where $k\geq 6$ and $L$ is a complete graph on four or more vertices, a continuum of non-isomorphic simply-connected $(k,L)$-complexes was independently constructed by Ballmann-Brin in~\cite{BallBrin}, and by Haglund in~\cite{Haglund2}. Also there are pairs $(k,L)$ for which no simply-connected $(k,L)$-complex exists. 

For $k\geq 4$, $L=K_{q_1,q_2}$ with $q_1,q_2\geq 1$, we have the following: if $k$ is even there is a unique simply-connected $(k,L)$-complex, and if $k$ is odd there is a simply-connected $(k,L)$-complex if and only if $q_1=q_2$, in which case it is unique (see \cite{Swi} and \cite{Wise}). More generally Lazarovich in~\cite{Laz} gives a combinatorial condition on $L$ for which a simply-connected $(k,L)$-complex is unique if it exists.

Polygonal complexes are metrized as follows. We metrize each face as either a spherical, Euclidean or hyperbolic polygon such that the metrics agree on any non-empty intersection of faces. Finally any edges not yet carrying a metric are metrized as intervals of the real line. We equip the polygonal complex with the corresponding quotient pseudometric (see~\cite{BH}, p.65, for details). In general this pseudometric is not a metric, however if the complex has only finitely many isometry types of cells, then this pseudometric is a complete geodesic metric (see~\cite{BH}, p.97). If in addition the complex is locally finite, it follows from the Hopf-Rinow Theorem that the complex is a proper geodesic metric space. In the locally finite case it is also true that the CW topology and metric induced topology will agree. One can always replace a metrized polygonal complex by its barycentric subdivision, which is a metrized simplicial complex (see~\cite{BH}, p.115). From now on we shall assume that a given polygonal complex comes equipped with a metric.  

Finally we remark that the more general ``\emph{polyhedral} complexes" are not required to be regular by most authors. Therefore the notion of a polygonal complex presented here is stronger than the usual notion of a ``2-dimensional polyhedral complex".  

\subsection{Bourdon buildings}\label{sec:Bourdon}

We recall the $2$-dimensional buildings that we will be considering. Let $m, q_1, q_2 \geq 2$ be integers (not necessarily distinct).  Let $\q$ be the $2m$-tuple $\q = (q_1, q_2, \ldots, q_1, q_2)$ with entries alternating between $q_1$ and $q_2$.   
Let $P$ be a Euclidean square, if $m = 2$, and a regular right-angled hyperbolic $2m$-gon, if $m \geq 3$. We define $I_{2m,\q}$ to be the unique simply-connected $(2m,K_{q_1,q_2})$-complex in which each face is metrized as a copy of $P$. In the setting where $L=K_{q_1,q_2}$, we also denote this building by $I_{2m,L}$. 

The complex $I_{2m,\q}$ is often called a \emph{Bourdon building}. We refer the reader to ~\cite{Bourdon97} and~\cite{Bourdon} where they are defined and studied. The most general case of a Bourdon building is $I_{p,\q}$ where $p \geq 4$ and $\q$ is any $p$-tuple of cardinalities at least $2$. The chambers of Bourdon's building are its faces, so each chamber is isometric to $P$. If $m = 2$ then $I_{2m,\q}$ is the product of the $q_1$- and $q_2$-regular trees, and its apartments are copies of the tessellation of the Euclidean plane by squares. If $m \geq 3$ then $I_{2m,\q}$ is not a product space, and its apartments are copies of the tessellation of the hyperbolic plane by regular right-angled $2m$-gons.  

Regarding $P$ as a polygon in the Euclidean plane, if $m = 2$, and in the hyperbolic plane, if $m \geq 3$, let $S_P = \{ s_1, \dots, s_{2m} \}$ be the set of reflections in the sides of $P$, so that $s_i$ and $s_{i+1}$ are the reflections in adjacent sides for $i \in \Z /2m\Z$.  Let $(W_P,S_P)$ be the corresponding right-angled Coxeter system.  That is, $W_P$ has generating set $S_P$ and relations $s_i^2 = 1$ and $(s_i s_{i+1})^2 = 1$ for all $i \in \Z / 2m \Z$.  Then $I_{2m,\q}$ is a right-angled building of type $(W_P,S_P)$, meaning exactly that its apartments are copies of the tessellation of either the Euclidean plane (if $m = 2$) or the hyperbolic plane (if $m \geq 3$) induced by the action of $W_P$.

\subsection{Coxeter groups and Davis complexes}\label{sec:Davis}

We now describe the Coxeter groups and associated $2$-dimensional Davis complexes that we will be considering. A reference for the material in this section is the book of Davis~\cite{Davis}.
 
Let $m \geq 2$ be an integer.  Let $L$ be a finite, connected, simplicial and bipartite graph with vertex set $$S_L = \{ x_1, \dots, x_{q_1}\} \sqcup \{ y_1, \dots, y_{q_2} \},$$ where $q_1, q_2 \geq 2$ are integers (possibly equal), and every edge of $L$ connects a vertex $x_i$ to a vertex $y_j$. We write $E(L)$ for the edge set of $L$ and $(x_i,y_j)$ for elements of $E(L)$. For example if $E(L)$ contains all possible edges then $L$ is the complete bipartite graph $K_{q_1,q_2}$, but we do not restrict to this case. Figure \ref{fig:Fig1} shows an example of $L$ for the case $q_1=2$, $q_2=3$.

\begin{figure}[t]
	\centering
		\includegraphics{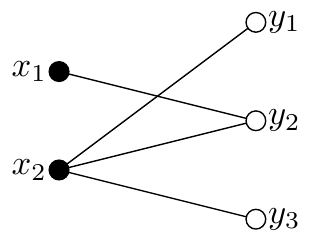}
	\caption{An example of the graph $L$} 
\label{fig:Fig1}
\end{figure}

We define $W=W_{2m,L}$ to be the Coxeter group with generating set $S_L$, and relations $x_i^2 = y_j^2 = 1$ for $1 \leq i \leq q_1$, $1 \leq j \leq q_2$, and $(x_i y_j)^m = 1$ for all $(x_i,y_j) \in E(L)$.  Note that $(W_{2m,L},S_L)$ is a right-angled Coxeter system if and only if $m = 2$. By Moussong's Theorem (see Theorem 12.6.1 of~\cite{Davis}), since $L$ is bipartite the group $W$ is hyperbolic for all $m \geq 3$, and if $m = 2$ is hyperbolic if and only if $L$ contains no embedded $4$-cycles. 

We next recall the construction of the Davis complex $\Sigma = \Sigma_{2m,L}$ for the Coxeter system $(W_{2m,L}, S_L)$.  To simplify notation, put $S = S_L$.  If $T$ is a subset of $S$, the \emph{special subgroup} $W_T$ is the subgroup of $W$ generated by $T$, with $W_\emptyset$ trivial by convention.  For example, each $W_{\{x_i\}}$ and $W_{\{y_j\}}$ is cyclic of order $2$, while if $(x_i,y_j) \in E(L)$  then $W_{\{x_i,y_j\}}$ is the dihedral group of order $2m$.  A \emph{spherical subset} of $S$ is a subset $T \subseteq S$ for which $W_T$ is finite.  In this setting, the spherical subsets of $S$ are $\emptyset$, $\{x_i\}$ for $1 \leq i \leq q_1$, $\{y_j\}$ for $1 \leq j \leq q_2$, and $\{x_i,y_j\}$ whenever $(x_i,y_j) \in E(L)$.

Let $L'$ be the first barycentric subdivision of the graph $L$ and let $K$ be the cone on $L'$.  The $2$-dimensional simplicial complex $K$ is called a \emph{chamber} (note that this chamber is not the same as the chamber for Bourdon's building in Section~\ref{sec:Bourdon} above).  We assign \emph{types} to the vertices of $K$ as follows.  The cone point of $K$ has type $\emptyset$, and each vertex of $K$ which is also a vertex $s$ of $L$ has type $\{s\}$.  Each remaining vertex of $K$ is the midpoint of an edge $(x_i,y_j)$ in $L$, and we assign this vertex of $K$ to have type $\{x_i,y_j\}$.  Observe that this  assignment of types induces a bijection between vertices of $K$ and spherical subsets of $S$, so that the endpoints of each edge in $K$ have types $T' \subsetneq T$. We metrize $K$ as a polygonal complex in the following way: each simplex is metrized as the unique geodesic simplex in either Euclidean or hyperbolic space with angle $\pi/2m$ at the vertex of type $\{x_i,y_j\}$, angle $\pi/2$ at the vertex of type $\{s\}$, and angle $\pi/4$ at the vertex of type $\emptyset$. Observe the simplices are Euclidean only when $m=2$. Figure \ref{fig:Fig2} shows $K$ locally at the edge $(x_2,y_2)$ of the graph in Figure \ref{fig:Fig1}. The vertices have been colored according to their type.

\begin{figure}[t]
	\centering
		\includegraphics{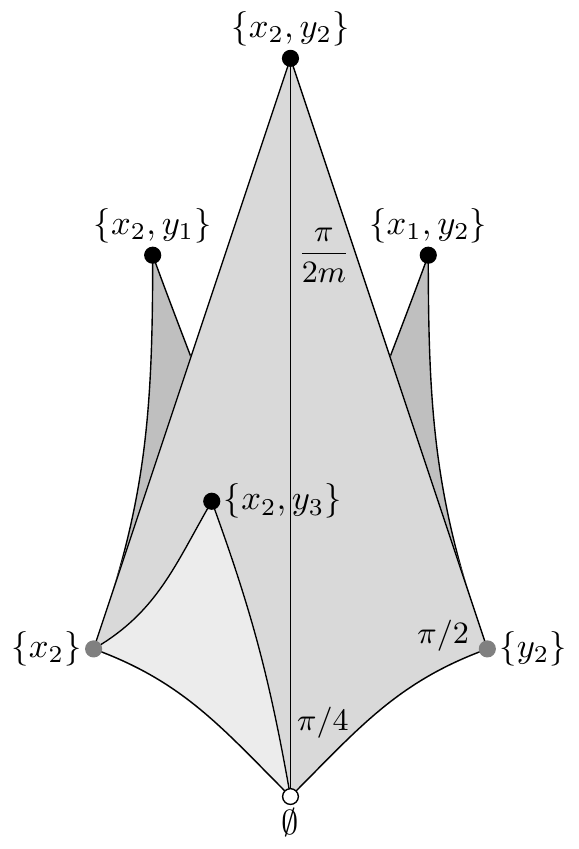}
	\caption{The metrized simplicial complex $K$} 
\label{fig:Fig2}
\end{figure} 

For each $s \in S$, let $K_s$ be the subcomplex of $K$ consisting of all edges of $L'$ which contain the vertex $s$.  In our situation, $K_s$ is the star graph of valence equal to the valence of $s$ in $L$.  The subcomplex $K_s$ is called the \emph{mirror} (of type $s$) of $K$.  Note that two mirrors $K_s$ and $K_{s'}$ intersect, with $K_s \cap K_{s'}$ a point, if and only if the vertices $s$ and $s'$ are adjacent in $L$.  For each point $z \in K$ let $S(z) = \{ s \in S \mid z \in K_s \}$. Then $S(z)$ is empty if and only if $z$ is not in the subcomplex $L'$ of $K$, and otherwise $S(z)$ is either $\{x_i\}$, $\{y_j\}$, or $\{x_i,y_j\}$, with the last case occurring, for the unique point $z = K_{x_i} \cap K_{y_j}$, if and only if $(x_i,y_j) \in E(L)$.  

The \emph{Davis complex} $\Sigma = \Sigma_{2m,L}$ is obtained by, roughly speaking, gluing together $W$-many copies of the chamber $K$ along mirrors.  Formally, $\Sigma$ is the quotient
\[
\Sigma := W \times K / \sim
\]
where $(w,z) \sim (w',z')$ if and only if $z = z'$ and $w^{-1}w'$ is in the special subgroup $W_{S(z)}$.  

In our setting, $\Sigma$ is simplicially isomorphic to the barycentric subdivision of a simply-connected $(2m,L)$-complex, where each $2m$-gon is metrized as a regular right-angled polygon. Thus in the special case that $L$ is the complete bipartite graph $K_{q_1,q_2}$, the Davis complex $\Sigma = \Sigma_{2m,L}$ may be identified with the barycentric subdivision of Bourdon's building $I_{2m,\q}=I_{2m,L}$.  In this identification, the vertices of type $\emptyset$ in $\Sigma$ are the vertices of $I_{2m,\q}$, and the vertices of type $\{x_i,y_j\}$ in $\Sigma$ are the barycenter of faces of $I_{2m,\q}$.  Note that the Coxeter groups $W_{2m,L}$ and $W_P$ are distinct, except if $m = 2$ and $L = K_{2,2}$, in which case $I_{2m,\q}$ is just the tessellation of the Euclidean plane by squares and $\Sigma_{2m,L}$ is the barycentric subdivision of this tessellation.

The assignment of types to the vertices of $K$ induces an assignment of types to the vertices of $\Sigma$, so that two adjacent vertices in $\Sigma$ have types spherical subsets $T' \subsetneq T$.  The group $W$ then has a natural type-preserving left-action on $\Sigma$ with compact quotient $K$, so that the stabilizer of each vertex of $\Sigma$ of type $T$  is a conjugate of the finite group $W_T$. In particular, $W$ acts freely on the set of vertices of $\Sigma$ of type $\emptyset$ (these are the cone points of the copies of $K$ in $\Sigma$). 

\section{Complexes of groups and construction of lattices}\label{sec:ComplexesLattices}

In this section we recall the theory of complexes of groups that we will need, mainly following the reference~\cite[Chapter III.$\cC$]{BH}. We will skip many details and give only special cases of definitions.  We also use the sequence of examples in this section to recall the construction of Bourdon's building $I_{2m,\q}$ as the universal cover of a complex of groups whose fundamental group is a lattice in $\Aut(I_{2m,\q})$  (see Section~\ref{sec:Bourdon}), and to realize the Coxeter group $W_{2m,L}$ as the fundamental group of a complex of groups with universal cover the Davis complex $\Sigma_{2m,L}$ (see Section~\ref{sec:Davis}).  The examples in this section are the key information for our proofs in Section~\ref{sec:Construction}, as is Corollary~\ref{cor:Maximality}, which gives us a lower bound on the index of torsion-free subgroups.

\subsection{Small categories without loops}\label{sec:Scwols}

We will be constructing our complexes of groups over \emph{small categories without loops} (scwols). Scwols will also serve as combinatorial counterparts to polygonal complexes, allowing us to construct coverings of complexes of groups by polygonal complexes in two different settings, that of Bourdon's building $I_{2m,\q}$ and that of the Davis complex $\Sigma_{2m,L}$.

\begin{definition}\label{def:Scwol} 
A \emph{scwol} $\cX$ is the disjoint union of a set $V(\cX)$ of vertices and a set $E(\cX)$ of edges, with edge $a$ oriented from its initial vertex $i(a)$ to its terminal vertex $t(a)$, such that $i(a) \not = t(a)$ for all $a \in E(\cX)$.  A pair of edges $(a,b)$ is \emph{composable} if $i(a)=t(b)$, in which case there is a third edge $ab$, called the \emph{composition} of $a$ and $b$, such that $i(ab)=i(b)$ and $t(ab)=t(a)$, and if both $(a,b)$ and $(b,c)$ are composable then $(ab)c = a(bc)$ (associativity).  
\end{definition}

Scwols can be characterized as ``small categories" (i.e. categories with a \emph{set} of objects and a \emph{set} of morphisms) which don't contain any non-identity endomorphisms or isomorphisms. A scwol is called \emph{thin} if there is at most one edge between each pair of vertices. Observe that the composition of edges in a thin scwol is uniquely determined. Thin scwols are equivalent to partially ordered sets (recall that a partially ordered set $(Q,\leq)$ is naturally a small category by taking $Q$ as its set of objects and including a morphism $\s\rightarrow \tau$ whenever $\s\geq \tau$). The \emph{dimension} of a thin scwol is defined to be one less than the supremum of the lengths of chains in the corresponding partially ordered set.  

One can associate a simplicial complex to a thin scwol by taking the geometric realization of the corresponding partially ordered set (see~\cite{BH}, p.370). The dimension of this simplicial complex is equal to the dimension of the scwol. In the case of a 2-dimensional scwol, if each face of the geometric realization is metrized as a geodesic triangle in either special, Euclidean or hyperbolic space such that the metrics agree on edges, then the geometric realization is naturally a metrized (simplicial) polyhedral complex. 

More generally, for any scwol one can construct the geometric realization of its category theoretic ``nerve" (see~\cite{BH}, p.522). This can then be metrized to give a (simplicial) polyhedral complex (see~\cite{BH}, p.562).

A scwol is called \emph{connected} or \emph{simply-connected} if its geometric realization is respectively connected or simply-connected. From now on all scwols are thin, connected, and at most 2-dimensional.  

\begin{example}\label{egs:Scwols} 
 \begin{enumerate}
\item Let $X$ be a polygonal complex.  We now associate two scwols $\cX$ and $\cXop$ to $X$ such that the geometric realizations of both scwols are equal to the barycentric subdivision of $X$.  The scwol $\cX$ will be used to construct a covering of complexes of groups in the setting of Bourdon's building $I_{2m,\q}$, whereas $\cXop$ will be used in the setting of the Davis complex $\Sigma_{2m,L}$.  Let $X'$ denote the barycentric subdivision of $X$.
\begin{enumerate}
\item The usual way to associate a scwol $\cX$ to $X$ is as follows.  Define $V(\cX) := V(X')$ and $E(\cX) := E(X')$.  The edges of $\cX$ are then oriented from higher-dimensional to lower-dimensional cells in $X$, that is, there is an edge $a \in E(\cX)$ so that $i(a)$ is the barycenter of cell $\s$ and $t(a)$ is the barycenter of cell $\tau$ if and only if $\tau \subsetneq \s$ in $X$.  More concretely, any edge of $\cX$ goes from the barycenter of a face of $X$ to the midpoint of an edge of $X$, or from the midpoint of an edge of $X$ to a vertex of $X$, or from the barycenter of a face of $X$ to a vertex of $X$. This construction naturally metrizes (the geometric realization of) $\cX$ allowing us to recover $X$ from $\cX$. Finally we observe that $\cX$ is equivalent to the set of closed cells of $X$ ordered by inclusion.   
\item For $\cX$ as in (a), we define the \emph{opposite scwol} $\cXop$ to have $V(\cXop) := V(\cX)$ and $E(\cXop) := E(\cX)$, and the orientations of all edges reversed.  That is, for each $a \in E(\cXop) = E(\cX)$, the initial vertex of $a$ in $\cX$ is the terminal vertex of $a$ in $\cXop$, and vice versa.  So in $\cXop$, edges go from lower-dimensional cells to higher-dimensional cells. Similarly this construction naturally equips $\cXop$ with a metric. Finally we observe that $\cXop$ is equivalent to the set of closed cells of $X$ ordered by reverse inclusion.
\end{enumerate}
\item Let $K$ be the chamber for the Coxeter system $(W_{2m,L},S_L)$, as defined in Section~\ref{sec:Davis} above. We associate a scwol $\cK$ to $K$ such that the geometric realization of $\cK$ is equal to $K$ as follows. Let $V(\cK) := V(K)$ and $E(\cK) := E(K)$.  Recall that the endpoints of each edge of $K$ have types $T' \subsetneq T$ where $T'$ and $T$ are spherical subsets of $S = S_L$.  The edges of the scwol $\cK$ are then oriented by inclusion of type, that is, $i(a)$ has type $T'$ and $t(a)$ has type $T$ if and only if $T' \subsetneq T$. Note that a pair of edges $(a,b)$ in $\cK$ is composable if and only if the edge $b$ goes from the cone point of $K$ (which has type $\emptyset$) to a vertex of type either $\{x_i\}$ or $\{y_j\}$, and the edge $a$ goes from $t(b)$ to a vertex of type $\{x_i,y_j\}$ where $(x_i, y_j) \in E(L)$. The metric on $K$ naturally equips $\cK$ with a metric.
\end{enumerate}
\end{example}

Coverings of complexes of groups are defined over the following maps of scwols.  Condition~(3) here restricts the kinds of ``foldings" which are allowed.

\begin{definition}\label{def:MorphismScwols}  Let $\cX$ and $\cY$ be scwols.  A \emph{non-degenerate morphism} $f:\cX \to \cY$ is a map sending $V(\cX)$ to $V(\cY)$ and $E(\cX)$ to $E(\cY)$, so that:
\begin{enumerate}
\item $i(f(a)) = f(i(a))$ and $t(f(a)) = f(t(a))$ for each $a \in E(\cX)$; 
\item $f(ab) = f(a)f(b)$ for each pair of composable edges $(a,b)$ in $\cX$; and
\item for each $\s \in V(\cX)$, the restriction of $f$ to the set of edges $\{a \in E(\cX) \mid i(a) = \s \}$ is a bijection onto the set of edges $\{ a' \in E(\cY) \mid i(a') = f(\s) \}$.
\end{enumerate}
\end{definition}

\subsection{Complexes of groups}\label{sec:Complexesofgroups}

We now define complexes of groups.

\begin{definition}\label{def:ComplexGroups}
A \emph{complex of groups} $G(\cX)=(G_\sigma, \psi_a)$ over a scwol
$\cX$ is given by: 
\begin{enumerate} \item a group $G_\sigma$ for each
$\sigma \in V(\cX)$, called the \emph{local group} at $\sigma$; and 
\item a monomorphism $\psi_a: G_{i(a)}\rightarrow G_{t(a)}$ along the edge $a$ for each
$a \in E(\cX)$, so that $\psi_{ab} = \psi_a \circ\psi_b$  for each pair of composable edges $(a,b)$.
\end{enumerate}
\end{definition}

\noindent A complex of groups is \emph{trivial} if each local group is trivial. We identify scwols with their corresponding trivial complexes of groups. 

\begin{example}\label{egs:ComplexesGroups}  
We continue notation from Examples~\ref{egs:Scwols}. 
\begin{enumerate}
\item For any polygonal complex $X$ we have the trivial complex of groups $H(\cX)$ over the associated scwol $\cX$, and the trivial complex of groups $H(\cXop)$ over the opposite scwol~$\cXop$.
\item  Let $m, q_1,q_2 \geq 2$ be integers (not necessarily distinct).  Let $P$ be the regular $2m$-gon defined in Section~\ref{sec:Bourdon} with $\cP$ the associated scwol as in Examples~\ref{egs:Scwols}(1a).  
Let $G_1$ be any group of order $q_1$, and $G_2$ any group of order $q_2$.  We now use the groups $G_1$ and $G_2$ to construct a complex of groups $G(\cP) = (G_\s,\psi_a)$ over $\cP$.  Let $\s \in V(\cP)$.   If $\s$ is the barycenter of the face of $P$, the local group $G_\s$ is trivial.  If $\s$ is a vertex of $P$, the local group $G_\s$ is the direct product $G_1 \times G_2$.  The remaining $\s$ are the midpoints of edges of $P$, and the local groups for these $2m$ edges alternate between $G_1$ and $G_2$, so that at each vertex of $P$, one of the adjacent local groups is $G_1$ and the other is $G_2$.
All monomorphisms of local groups are the natural inclusions.
\item Let $\cK$ be the scwol associated to the chamber $K$ for the Coxeter system $(W_{2m,L},S_L)$ as in Examples~\ref{egs:Scwols}(2).  We construct a complex of groups $G(\cK)$ over $\cK$ as follows.  Let $\s \in V(\cK)$.  Then $\s$ has type a spherical subset $T \subseteq S$, and we define $G_\s$ to be the (finite) special subgroup $W_T$.  Note that the cone point has  trivial group, and all other local groups are either cyclic of order $2$ or dihedral of order $2m$.  All monomorphisms are the natural inclusions.
\end{enumerate}
\end{example}

If $L$ is the graph shown in Figure \ref{fig:Fig1}, then Figure \ref{fig:Fig3} shows $G(\cK)$ locally at the (image of the) edge $(x_2,y_2)$ in $\cK$. The dihedral group of order $2m$ is denoted by $D_m$ and the cyclic group of order $2$ is denoted by $Z_2$.

\begin{figure}[t]
	\centering
		\includegraphics{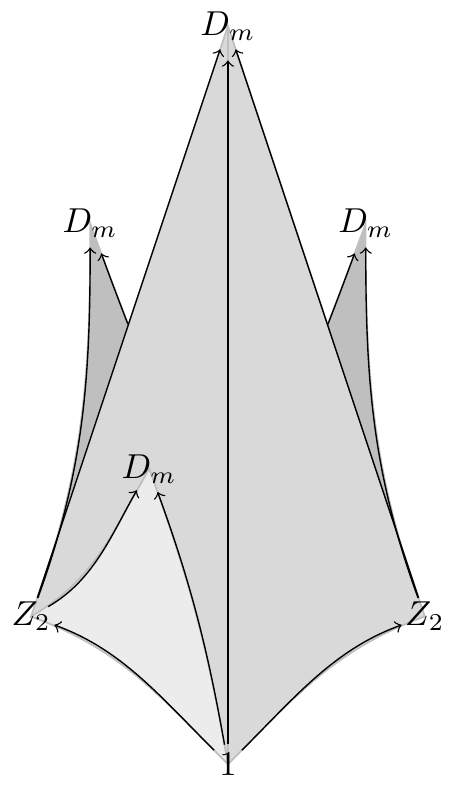}
	\caption{The complex of groups $G(\cK)$ over $\cK$} 
\label{fig:Fig3}
\end{figure}

We refer the reader to~\cite{BH} for the general definition of the \emph{fundamental group} $\pi_1(G(\cX))$ of a complex of groups $G(\cX)$.  We will only need the following examples, where we continue notation from Examples~\ref{egs:ComplexesGroups}.

\begin{example}\label{egs:FundGroup}  
\begin{enumerate}
\item If a polygonal complex $X$ has (topological) fundamental group $H$, then the trivial complexes of groups $H(\cX)$ and $H(\cXop)$ have fundamental group  $H$ as well.
\item The fundamental group of $G(\cP)$ has presentation
\[
\pi_1(G(\cP)) = \langle G_{1,1},\dots,G_{1,m}, G_{2,1},\dots, G_{2,m} \mid [G_{1,k},G_{2,k}] = [G_{2,k},G_{1,k+1}] = 1 \rangle
\]
where for $i = 1,2$ and $k  \in \Z/m\Z$, each $G_{i,k}$ is isomorphic to $G_i$.  (In this presentation, the relations within each group $G_{i,k}$ are included implicitly.)  The commutator relations mean that local groups on adjacent edges of $P$ commute with each other in $\pi_1(G(\cP))$.  Thus $\pi_1(G(\cP))$ may be viewed as a graph product of groups, where the underlying graph is a $2m$-cycle and the groups $G_1$ and $G_2$ are placed on alternate vertices in this cycle.
\item The fundamental group of $G(\cK)$ is the Coxeter group $W_{2m,L}$.
\end{enumerate}
\end{example}

Every complex of groups has a universal cover which is a (possibly non-trivial) complex of groups with a trivial fundamental group. Induced by the action of a group $\G$ on a scwol $\cX$ is the quotient complex of groups $\G\ltimes \cX$. If $\cX$ is simply connected then $\cX$ is the universal cover of $\G\ltimes \cX$ and $\pi_1(\G\ltimes \cX)\cong \G$. A complex of groups $G(\cX) = (G_\s,\psi_a)$ is called \emph{developable} if it arises as a quotient complex of groups in this way. Unlike graphs of groups, complexes of groups are not in general  developable.  The examples $H(\cX)$, $H(\cXop)$, $G(\cP)$ and $G(\cK)$ above are all developable. This follows from the fact that they are all non-positively curved (see~\cite{Hae}). For the general construction of the complex of groups induced by a group acting on a scwol, see~\cite{BH}.  

Conversely, if the universal cover $\widetilde{G(\cX)}$ of a complex of groups $G(\cX)$ is trivial, and hence a simply-connected scwol, then $\widetilde{G(\cX)}$ is naturally equipped with an action of $\pi_1(G(\cX))$ such that the complex of groups induced by this action is (isomorphic to) $G(\cX)$. Hence $G(\cX)$ is developable.

It can be shown that the existence of a trivial universal cover for a complex of groups $G(\cX)$ is equivalent to the following: for all $\s \in V(\cX)$, the local group $G_\s$ embeds in the fundamental group $\pi_1(G(\cX))$. 

If $\cX$ is metrized, then $\widetilde{G(\cX)}$ is naturally metrized by developing the metric equivariantly. Conversely the complex of groups induced by a group acting on a metrized scwol is naturally metrized. 

We will only need the following examples.

\begin{example}\label{egs:UniversalCover} 
\begin{enumerate}
\item  Let $\widetilde X$ be the simply-connected polygonal complex which is the (classical) universal cover of $X$.  Then the universal cover of the trivial complex of groups $H(\cX)$ is the scwol associated to $\widetilde X$ as in Examples~\ref{egs:Scwols}(1a), and the universal cover of the trivial complex of groups $H(\cXop)$ is the opposite scwol associated to $\widetilde X$ as in Examples~\ref{egs:Scwols}(1b).  The complexes of groups $H(\cX)$ and $H(\cXop)$ are induced by the free action of $H = \pi_1(X)$ on $\widetilde X$, and $X$ is the quotient space $H \bs \widetilde X$.  
\item  The universal cover of $G(\cP)$ is the scwol associated to the unique simply-connected $(2m,L)$-complex with $L = K_{q_1,q_2}$.  Hence the  universal cover of $G(\cP)$ is (the scwol associated to) Bourdon's building $I_{2m,\q}$, and the complex of groups $G(\cP)$ is induced by the action of $\pi_1(G(\cP))$ on $I_{2m,\q}$.  It follows that $\pi_1(G(\cP))$ acts on $I_{2m,\q}$ with compact quotient $P$, so that the stabilizer of each face of $I_{2m,\q}$ is trivial, the stabilizer of each edge of $I_{2m,\q}$ is isomorphic to either $G_1$ or $G_2$, and the stabilizer of each vertex of $I_{2m,\q}$ is isomorphic to $G_1 \times G_2$.
\item The geometric realization of the universal cover $\widetilde{G(\cK)}$ of $G(\cK)$ is the Davis complex $\Sigma_{2m,L}$. The action of $W_{2m,L}$ on $\Sigma_{2m,L}$ can naturally be regarded as an action of $W_{2m,L}$ on $\widetilde{G(\cK)}$, and $G(\cK)$ is the induced complex of groups. 
\end{enumerate}
\end{example}
 
A developable complex of groups is \emph{faithful} if its fundamental group acts effectively on its universal cover.  A sufficient condition for faithfulness of a developable complex of groups $G(\cX) = (G_\s, \psi_a)$ is that one of the local groups $G_\s$ be trivial.  Thus all of the examples $H(\cX)$, $H(\cXop)$, $G(\cP)$ and $G(\cK)$ we have been discussing are faithful.  If $G(\cX)$ is developable and faithful, with universal cover $\cY:=\widetilde{G(\cX)}$, we may identify $\pi_1(G(\cX))$ with a subgroup of $\Aut(\cY)$. 

Let $Y$ be either a simply-connected locally finite polygonal complex or the Davis complex $\Sigma_{2m,L}$. If $Y$ is a polygonal complex, we pair it with the associated scwol in the sense of Examples~\ref{egs:Scwols}(1a), and if $Y$ is the Davis complex $\Sigma_{2m,L}$ we pair it with $\widetilde{G(\cK)}$. Suppose the universal cover $\cY$ is the scwol paired with $Y$. Identify the automorphisms of $\cY$ with their corresponding automorphisms of $Y$, allowing $\pi_1(G(\cX))$ to be identified with a subgroup of $\Aut(Y)$. Then $\pi_1(G(\cX))$ acts cocompactly on $Y$ if and only if $\cX$ is a finite scwol.  Also, $\pi_1(G(\cX))$ is a discrete subgroup of $\Aut(Y)$ if and only if all local groups in $G(\cX)$ are finite.  It follows that if $\cX$ is finite and all local groups of $G(\cX)$ are finite, we may regard $\pi_1(G(\cX))$ as a cocompact lattice in $\Aut(Y)$. In particular we have the following:

\begin{example}\label{egs:Lattices} 
\begin{enumerate}
\item  Let $X$ be a finite polygonal complex with fundamental group $H$ and universal cover $\widetilde X$.  Then $H = \pi_1(H(\cX)) = \pi_1(H(\cXop))$ is a cocompact lattice in $\Aut(\widetilde X)$.  
\item  From now on, write $\G_{2m}(G_1,G_2)$ or simply $\G$ for the fundamental group $\pi_1(G(\cP))$ with presentation given in Examples~\ref{egs:FundGroup}(2) above.  Then $\G$ is a cocompact lattice in $\Aut(I_{2m,\q})$. 
\item The Coxeter group $W_{2m,L} = \pi_1(G(\cK))$ is a cocompact lattice in $\Aut(\Sigma_{2m,L})$. It is known that $\Aut(\Sigma_{2m,L})$ is non-discrete if $L$ has a non-trivial automorphism which fixes the star of a vertex (see~\cite{HP}). For example the graph in Figure \ref{fig:Fig1} has a non-trivial automorphism which fixes the star of the vertex $x_1$.
\end{enumerate}
\end{example}

\subsection{Coverings of complexes of groups}\label{sec:Coveringsofcomplexesofgroups}

We now define a covering of complexes of groups.  We give this definition only in the special case that we will need.

\begin{definition}\label{def:Covering} 
Let $f: \cX\to \cY$ be a non-degenerate morphism of scwols.  Let $H(\cX)$ be a trivial complex of groups and let $G(\cY) = (G_\sigma, \psi_a)$ be a complex of groups.   A \emph{covering of complexes of groups} $\Phi: H(\cX) \to G(\cY)$ over $f$ consists of an element $\phi(a) \in G_{f(t(a))}$ for each $a \in E(\cX)$, such that: 
\begin{enumerate}
\item\label{i:commuting}  for all pairs of
composable edges $(a,b)$ in $E(\cX)$, $\phi(ab) = \phi(a) \,\psi_{f(a)}(\phi(b))$; 
and 
\item for each $\sigma \in V(\cX)$ and each $b \in E(\cY)$ such that
$t(b) = f(\sigma)$, the map \[  \Phi_{\s/b}: \{ a \in E(\cX) \mid f(a)  = b \mbox{ and } t(a)=\sigma\}  \to G_{f(\sigma)} / \psi_b(G_{i(b)})\] induced by $a \mapsto \phi(a)$ is a bijection.
\end{enumerate} 
\end{definition}

Let $\Phi:H(\cX) \to G(\cY)$ be a covering of complexes of groups as in Definition~\ref{def:Covering}.  Suppose that $\cX$ and $\cY$ are finite scwols and that all local groups in $G(\cY)$ are finite.  Let $\tau \in V(\cY)$.  The \emph{number of sheets} of the covering $\Phi$ is the positive integer
\[
n := \sum_{\s \in f^{-1}(\tau)} \frac{|G_\tau|}{|H_\sigma|} =  |f^{-1}(\tau)|\cdot |G_\tau|.
\]
This definition is independent of the choice of $\tau \in V(\cY)$, since $\cY$ is connected, and the last equality holds since $H(\cX)$ is a complex of trivial groups.   If $\Phi$ has $n$ sheets we may say that $\Phi$ is an \emph{$n$-sheeted} covering.  In particular, if some local group $G_\tau$ is trivial then $\Phi$ is $n$-sheeted where $n = |f^{-1}(\tau)|$.

The next two theorems are special cases of results on functoriality of coverings which are implicit in~\cite{BH}, and stated and proved explicitly in~\cite{LT}.

\begin{theorem}\label{thm:Coverings} 
Let $H(\cX)$ be a trivial complex of groups and $G(\cY)$ be a complex of groups, where $\cX$ and $\cY$ are finite scwols.  Suppose both complexes of groups are developable and that there is an $n$-sheeted covering of complexes of groups $\Phi:H(\cX) \to G(\cY)$.  Put $H = \pi_1(H(\cX))$ and $G = \pi_1(G(\cY))$.  Then $\Phi$ induces an embedding of $H$ as an index $n$ subgroup of $G$ 
and an equivariant isomorphism of universal covers $\widetilde{H(\cX)} \longrightarrow
\widetilde{G(\cY)}$. 
\end{theorem}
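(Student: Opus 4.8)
The plan is to pass to universal covers and reduce the statement to one about two group actions on a single scwol. Since both $H(\cX)$ and $G(\cY)$ are developable, they have universal covers $\widetilde{H(\cX)}$ and $\widetilde{G(\cY)}$, which are simply-connected scwols carrying actions of $H$ and $G$ respectively, with quotient complexes of groups the originals. Because $H(\cX)$ is trivial, $H$ acts \emph{freely} on $\widetilde{H(\cX)}$ with quotient scwol $\cX$; the group $G$ acts on $\widetilde{G(\cY)}$ with quotient scwol $\cY$, and by developability the stabilizer of each vertex lying over $\tau \in V(\cY)$ is a copy of the local group $G_\tau$, of order $|G_\tau|$. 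The first step is to show that the covering $\Phi$ induces a homomorphism $\Phi_* : H \to G$ together with a $\Phi_*$-equivariant non-degenerate morphism of scwols $\widetilde{\Phi} : \widetilde{H(\cX)} \to \widetilde{G(\cY)}$.

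The construction of $\widetilde{\Phi}$ is where conditions~(1) and~(2) of Definition~\ref{def:Covering} are used. Working with the explicit developments, a vertex of $\widetilde{H(\cX)}$ is represented by a pair $(h,\s)$ with $h \in H$ and $\s \in V(\cX)$, and one sends it to the vertex of $\widetilde{G(\cY)}$ determined by $\Phi_*(h)$ together with the coset data $\phi(a)$. The cocycle-type identity $\phi(ab) = \phi(a)\,\psi_{f(a)}(\phi(b))$ is exactly what makes this assignment compatible with composition of edges, so that $\widetilde{\Phi}$ is a well-defined morphism of scwols and $\Phi_*$ is a homomorphism; the bijectivity condition~(2) guarantees that $\widetilde{\Phi}$ restricts to a bijection on the star of every vertex, i.e.\ that it is a local isomorphism, hence a covering of scwols in the sense of~\cite{BH}. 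A local isomorphism of scwols induces a covering map of geometric realizations, and since both $\widetilde{H(\cX)}$ and $\widetilde{G(\cY)}$ are simply connected, this covering has one sheet and is therefore an isomorphism. This yields the asserted equivariant isomorphism of universal covers. Injectivity of $\Phi_*$ is then immediate: identifying $\widetilde{H(\cX)}$ with $\widetilde{G(\cY)}$ along $\widetilde{\Phi}$, any $h \in \ker\Phi_*$ acts as the identity, and freeness of the $H$-action forces $h = 1$, so $\Phi_*$ embeds $H$ into $G$.

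It remains to compute the index, the only genuinely arithmetic step. Put $\cZ := \widetilde{G(\cY)}$, identified with $\widetilde{H(\cX)}$ via $\widetilde{\Phi}$, so that $G$ and $H$ act on the same scwol. Fix $\tau \in V(\cY)$ and let $\Omega$ be the set of vertices of $\cZ$ lying over $\tau$. Then $\Omega$ is a single $G$-orbit with vertex stabilizer of order $|G_\tau|$, so $\Omega \cong G/G_\tau$ as a $G$-set. Restricting to $H$ and counting left $H$-cosets inside each double coset gives the standard formula
\[
[G:H] = \sum_{\cO} \frac{|G_\tau|}{|H_v|},
\]
where $\cO$ ranges over the $H$-orbits in $\Omega$ and $v \in \cO$. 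Freeness of the $H$-action gives $|H_v| = 1$ for every $v$, and the $H$-orbits in $\Omega$ correspond, under $\cZ \to H\bs\cZ = \cX$ followed by $f$, exactly to the vertices of $f^{-1}(\tau)$. Hence $[G:H] = |f^{-1}(\tau)|\cdot|G_\tau| = n$, matching the definition of the number of sheets.

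The main obstacle is the construction and verification of $\widetilde{\Phi}$ in the second paragraph: checking directly from the developments that the data $(\Phi_*,\phi)$ assemble into a well-defined equivariant scwol morphism which is a local isomorphism, so that simple-connectivity can be invoked to upgrade it to an isomorphism. This is precisely the functoriality of developments established in~\cite{BH} and~\cite{LT}, so in practice I would cite those results for this step and carry out the index computation, which is self-contained, in full.
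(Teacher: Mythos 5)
The paper does not actually prove this theorem: it records it as a special case of functoriality-of-coverings results implicit in \cite{BH} and stated and proved explicitly in \cite{LT}, which is precisely the citation you fall back on for the one genuinely hard step (assembling $(\Phi_*,\phi)$ into an equivariant local isomorphism $\widetilde{\Phi}$ and upgrading it to an isomorphism via simple connectivity). Your outline of that construction is the standard one, and your self-contained orbit-counting argument for the index --- identifying the fiber over $\tau$ with $G/G_\tau$, using freeness of the $H$-action to kill the stabilizer terms, and matching $H$-orbits with $f^{-1}(\tau)$ to get $[G:H]=|f^{-1}(\tau)|\cdot|G_\tau|=n$ --- is correct, so your proposal is in fact more detailed than the paper's own treatment while resting on the same external results.
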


\begin{theorem}\label{thm:Subgroups}  
Suppose $G(\cY) = (G_\s,\psi_a)$ is a developable complex of finite groups over a finite scwol $\cY$.  Let $G = \pi_1(G(\cY))$.  Then for any torsion-free index $n$ subgroup $H$ of $G$, there is an $n$-sheeted covering of complexes of groups $\Phi:H(\cX) \to G(\cY)$, where $H(\cX)$ is a trivial complex of groups over a finite scwol $\cX$ and $\pi_1(H(\cX)) = H$.
\end{theorem}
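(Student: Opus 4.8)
The plan is to realize $H$ geometrically as a free quotient of the universal cover shared by $H(\cX)$ and $G(\cY)$, and then to extract the covering data by choosing lifts. First I would pass to the universal cover. Since $G(\cY)$ is developable, $G = \pi_1(G(\cY))$ acts on the simply-connected scwol $\cZ := \widetilde{G(\cY)}$ with quotient $\cY$, and the stabilizer in $G$ of each vertex of $\cZ$ is isomorphic to the finite local group $G_\s$ at its image in $\cY$. If a nontrivial $h \in H$ fixed a vertex of $\cZ$, it would lie in such a finite stabilizer and hence have finite order, contradicting that $H$ is torsion-free; and since a scwol automorphism that fixes an edge necessarily fixes its two endpoints, it follows that $H$ acts freely on all of $\cZ$.

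Second, I would set $\cX := H \backslash \cZ$. Because $H$ acts freely on the thin, connected, at most $2$-dimensional scwol $\cZ$, the quotient is again such a scwol and $\cZ \to \cX$ is a covering; as $\cZ$ is simply-connected this identifies $\cZ = \widetilde{H(\cX)}$ and gives $\pi_1(H(\cX)) = H$ for the trivial complex of groups $H(\cX)$. The inclusion $H < G$ factors the $G$-quotient map as $\cZ \to \cX \xrightarrow{f} \cY$, and $f$ is a non-degenerate morphism: condition~(3) of Definition~\ref{def:MorphismScwols} for $f$ is inherited from the fact that the development maps $\cZ \to \cX$ and $\cZ \to \cY$ are themselves non-degenerate, as is standard for maps induced by an action~\cite{BH}.

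Third, finiteness of $\cX$ and the sheet count both come from a double-coset computation. For a fixed $\tau \in V(\cY)$ the vertices of $\cZ$ over $\tau$ form a single $G$-orbit, $G$-equivariantly identified with $G/G_\tau$, so $f^{-1}(\tau)$ is identified with the double-coset space $H \backslash G / G_\tau$. Each double coset $H g G_\tau$ decomposes into $[G_\tau : G_\tau \cap g^{-1}Hg]$ left $H$-cosets, and $G_\tau \cap g^{-1}Hg$ is finite and torsion-free, hence trivial, so this number equals $|G_\tau|$. Summing over double cosets gives $|f^{-1}(\tau)| \cdot |G_\tau| = [G:H] = n$; in particular $f^{-1}(\tau)$ is finite, and the analogous count over edges shows $\cX$ is a finite scwol. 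Comparing with the definition of the number of sheets, any covering we build will be $n$-sheeted.

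Finally I would construct the covering data $\Phi = (\phi(a))$. Since $H$ acts freely, every vertex stabilizer in $\cZ$ is trivial, so $\cZ \to \cX$ restricts to a bijection on the full star of each vertex. Choosing a reference lift $\widehat{\s} \in V(\cZ)$ of each $\s \in V(\cX)$, every edge $a \in E(\cX)$ then admits a unique lift $\widehat{a} \in E(\cZ)$ with $t(\widehat{a}) = \widehat{t(a)}$, and comparing $\widehat{a}$ with the reference lift over $f(a)$ determines, via the $G$-action, a well-defined element $\phi(a) \in G_{f(t(a))}$. I would then verify the two axioms of Definition~\ref{def:Covering}: the cocycle relation $\phi(ab) = \phi(a)\,\psi_{f(a)}(\phi(b))$, by composing lifts and using $\psi_{f(ab)} = \psi_{f(a)} \circ \psi_{f(b)}$, and the bijectivity of each $\Phi_{\s/b}$, which reflects that the edges of $\cX$ over $b$ terminating at $\s$ biject with the cosets $G_{f(\s)}/\psi_b(G_{i(b)})$ under the simply-transitive local action encoded by the development. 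The \emph{main obstacle} is precisely this last step: organizing the reference lifts, the edge orientations, and the two quotient maps so that conditions~(1) and~(2) hold with the exact composition conventions of~\cite{BH}; by contrast, the free action of $H$ and the double-coset count are routine. This step is exactly the converse-functoriality direction made explicit in~\cite{LT} (compare Theorem~\ref{thm:Coverings}), which could instead be cited directly.
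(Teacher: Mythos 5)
Your proposal is correct and takes essentially the approach the paper itself relies on: the paper gives no standalone proof of Theorem~\ref{thm:Subgroups}, deferring to functoriality results implicit in~\cite{BH} and proved explicitly in~\cite{LT}, and your argument --- quotienting the universal cover by the freely acting torsion-free subgroup $H$, identifying the fibre over $\tau$ with $H \backslash G / G_\tau$ so that triviality of $G_\tau \cap g^{-1}Hg$ yields the sheet count $|f^{-1}(\tau)|\cdot|G_\tau| = [G:H] = n$, and extracting the covering data $\phi(a)$ from choices of lifts --- is precisely the argument those references make explicit. The remaining points you gloss over are genuinely routine and inherited from the $G$-action (the scwol-action conditions on the restricted $H$-action, hence that $H\backslash\widetilde{G(\cY)}$ is again a scwol and the quotient maps are non-degenerate), so there is no gap.
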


Using Theorem~\ref{thm:Subgroups}, we can obtain lower bounds on the index of a torsion-free subgroup, as follows.

\begin{corollary}\label{cor:Sheets}  
Let $G(\cY)$ be as in the statement of Theorem~\ref{thm:Subgroups}.  Suppose $H$ is a torsion-free index $n$ subgroup of $G = \pi_1(G(\cY))$.  Then $n \geq |G_\tau|$ for all $\tau \in V(\cY)$.
\end{corollary}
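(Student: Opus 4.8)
The plan is to deduce the bound directly from Theorem~\ref{thm:Subgroups} together with the number-of-sheets formula, importing essentially all of the content from the former. Since $G(\cY)$ is a developable complex of finite groups over a finite scwol and $H$ is a torsion-free subgroup of $G = \pi_1(G(\cY))$ of index $n$, the hypotheses of Theorem~\ref{thm:Subgroups} are met verbatim. That theorem then furnishes a non-degenerate morphism $f:\cX \to \cY$ of finite scwols and an $n$-sheeted covering of complexes of groups $\Phi : H(\cX) \to G(\cY)$ over $f$, where $H(\cX)$ is a trivial complex of groups with $\pi_1(H(\cX)) = H$. The corollary is thus a matter of extracting a numerical inequality from the combinatorics of this covering.

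Next I would fix an arbitrary vertex $\tau \in V(\cY)$ and invoke the formula for the number of sheets. Because $H(\cX)$ is a trivial complex of groups, every local group $H_\sigma$ is trivial, so $|H_\sigma| = 1$ for all $\sigma \in f^{-1}(\tau)$, and the sum collapses to
\[
n = \sum_{\s \in f^{-1}(\tau)} \frac{|G_\tau|}{|H_\sigma|} = |f^{-1}(\tau)| \cdot |G_\tau|.
\]
This expresses $n$ as the product of the fiber size over $\tau$ with the order of the local group $G_\tau$.

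It then remains only to observe that the fiber $f^{-1}(\tau)$ is nonempty, whence $n = |f^{-1}(\tau)| \cdot |G_\tau| \geq |G_\tau|$; since $\tau$ was arbitrary, the bound holds for all $\tau \in V(\cY)$. The nonemptiness is the only point requiring any care, and it is the closest thing here to an obstacle. Rather than attempting to prove surjectivity of $f$ on vertices directly, I would read it off from the properties of the sheet count already recorded in the text: $n$ is a positive integer, independent of the choice of $\tau$, and $|G_\tau|$ is a positive integer as $G_\tau$ is finite. Their product being $n \geq 1$ forces the nonnegative integer $|f^{-1}(\tau)|$ to be at least $1$, which is exactly what is needed. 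Everything else is a formal substitution into a formula already established, so no genuine difficulty should arise.
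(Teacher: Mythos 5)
Your proposal is correct and follows essentially the same route as the paper: invoke Theorem~\ref{thm:Subgroups} to obtain an $n$-sheeted covering $\Phi:H(\cX)\to G(\cY)$ over $f:\cX\to\cY$, apply the sheet-count formula $n=|f^{-1}(\tau)|\cdot|G_\tau|$ at an arbitrary $\tau\in V(\cY)$, and conclude from $|f^{-1}(\tau)|\geq 1$. The only difference is cosmetic, namely how nonemptiness of the fiber is justified: the paper cites surjectivity of $f$, while you deduce $|f^{-1}(\tau)|\geq 1$ from the positivity of $n$ together with the independence of the sheet count from the choice of $\tau$; both rest on background facts stated in Section~\ref{sec:Coveringsofcomplexesofgroups}.
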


\begin{proof}  
Let $\tau \in V(\cY)$ and let $\Phi:H(\cX) \to G(\cY)$ be an $n$-sheeted covering corresponding to $H$, as guaranteed by Theorem~\ref{thm:Subgroups}.  Then by definition of the number of sheets, $n = |f^{-1}(\tau)| \cdot |G_\tau|$.  Now $f$ is surjective so $|f^{-1}(\tau)| \geq 1$, and the result follows.
\end{proof}

Applying this to the groups considered in the sequence of examples in this section, we have:

\begin{corollary}\label{cor:Maximality}  
Let $\G_{2m}(G_1,G_2)$ and $W_{2m,L}$ be the groups realized above as fundamental groups of the complexes of groups $G(\cP)$ and $G(\cK)$, respectively.  Then:
\begin{enumerate}
\item Any torsion-free finite-index subgroup of $\G_{2m}(G_1,G_2)$ has index at least $q_1 q_2$.  
\item Any torsion-free finite-index subgroup of $W_{2m,L}$ has index at least $2m$.
\end{enumerate}
\end{corollary}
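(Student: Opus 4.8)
The plan is to deduce both statements directly from Corollary~\ref{cor:Sheets} by applying it to the two complexes of groups $G(\cP)$ and $G(\cK)$ respectively. Before invoking that corollary I would check its hypotheses in each case: both complexes of groups are developable (Examples~\ref{egs:UniversalCover}(2) and~(3)), both are defined over finite scwols (the scwol $\cP$ is associated to a single $2m$-gon and $\cK$ to the compact chamber $K$), and in both cases every local group is finite. For $G(\cP)$ the local groups are the trivial group, copies of $G_1$ and $G_2$, and the direct product $G_1 \times G_2$, all finite since $q_1,q_2$ are finite; for $G(\cK)$ the local groups are the special subgroups $W_T$ for $T$ spherical, which are finite by definition of ``spherical''. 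Thus Corollary~\ref{cor:Sheets} applies to each, giving $n \geq |G_\tau|$ for every $\tau \in V(\cY)$ and every torsion-free index $n$ subgroup.

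For part~(1) I would then read off the local group of maximal order in $G(\cP)$. By Examples~\ref{egs:ComplexesGroups}(2) the local group at each vertex of $P$ is $G_1 \times G_2$, of order $q_1 q_2$, while every other local group has order $1$, $q_1$, or $q_2$, each of which is at most $q_1 q_2$. Choosing $\tau \in V(\cP)$ to be the barycenter of a vertex of $P$, Corollary~\ref{cor:Sheets} yields $n \geq |G_\tau| = q_1 q_2$, which is the desired bound.

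For part~(2) the argument is identical with $G(\cK)$ in place of $G(\cP)$. By Examples~\ref{egs:ComplexesGroups}(3) the local groups are trivial at the cone point, cyclic of order $2$ at vertices of type $\{x_i\}$ or $\{y_j\}$, and dihedral of order $2m$ at vertices of type $\{x_i,y_j\}$; the largest of these is the dihedral group, of order $2m$. Since $L$ is connected with at least $q_1 + q_2 \geq 4$ vertices it has at least one edge $(x_i,y_j)$, so $\{x_i,y_j\}$ is a spherical subset and there is a vertex $\tau \in V(\cK)$ of type $\{x_i,y_j\}$ with $|G_\tau| = 2m$. Corollary~\ref{cor:Sheets} then gives $n \geq 2m$.

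The substantive content is entirely packaged in Corollary~\ref{cor:Sheets}, so I do not expect a genuine obstacle here. The only points requiring attention are the bookkeeping of local-group orders in each complex of groups, and, for part~(2), confirming that a vertex of type $\{x_i,y_j\}$ actually exists so that the dihedral bound is attained; the latter follows immediately from the connectedness of $L$. Both checks are routine consequences of the constructions recalled in Section~\ref{sec:ComplexesLattices}.
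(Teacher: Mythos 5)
Your proposal is correct and follows exactly the paper's argument: apply Corollary~\ref{cor:Sheets} to $G(\cP)$ and $G(\cK)$, choosing the vertex whose local group is $G_1 \times G_2$ (order $q_1q_2$) in the first case and a dihedral special subgroup $W_{\{x_i,y_j\}}$ (order $2m$) in the second. The paper's proof is just a terser version of yours, leaving the hypothesis checks implicit.
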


\begin{proof}  
The complexes of groups $G(\cP)$ and $G(\cK)$ are both developable. The complex of groups $G(\cP)$ has local groups including $G_1 \times G_2$ of order $q_1 q_2$, and the complex of groups $G(\cK)$ has local groups including the dihedral group of order $2m$.
\end{proof}

\section{Embeddings of maximal torsion-free finite-index subgroup}\label{sec:Construction}

In this section we construct two coverings of complexes of groups, which induce finite-index embeddings of a maximal torsion-free group in the lattices constructed in Section ~\ref{sec:Complexesofgroups}. We continue notation from Section~\ref{sec:ComplexesLattices}.

\subsection{Construction of the torsion-free group}\label{sec:H}

First we construct the torsion-free group $H$ as the (topological) fundamental group of a polygonal complex $X$. Let us introduce some notation that will be used throughout Section~\ref{sec:Construction}. Let $L$ continue to denote a finite, connected, simplicial and bipartite graph as in Section~\ref{sec:Davis}. For each edge $(x_i,y_j) \in E(L)$, let $P_{ij}$ be a copy of the $2m$-gon $P$ described in Section~\ref{sec:Bourdon}. If $L = K_{q_1,q_2}$ there are $q_1 q_2$ such polygons, and otherwise there are strictly fewer than $q_1 q_2$ of them.  Now orient the edges of each $P_{ij}$ cyclically, and label the edges in the resulting $2m$-cycle going around the boundary of $P_{ij}$ by the word $x_i^1 y_j^1 x_i^2 y_j^2 \dots x_i^m y_j^m$. 

For each $k \in \Z/m\Z$, we label the vertex of $P_{ij}$ with incoming edge $x_i^k$ and outgoing edge $y_j^k$ by $u_{ij}^k$, and we label the vertex of $P_{ij}$ with incoming edge $y_j^k$ and outgoing edge $x_i^{k+1}$ by $v_{ij}^k$. In $P_{ij}$, the link of each $u_{ij}^k$ and each $v_{ij}^k$ is a single edge, which can be identified with the edge $(x_i,y_j) \in E(L)$. Figure \ref{fig:Fig4} shows the labeling of $P_{ij}$ for the case $m=3$.

\begin{figure}[t]
	\centering
		\includegraphics{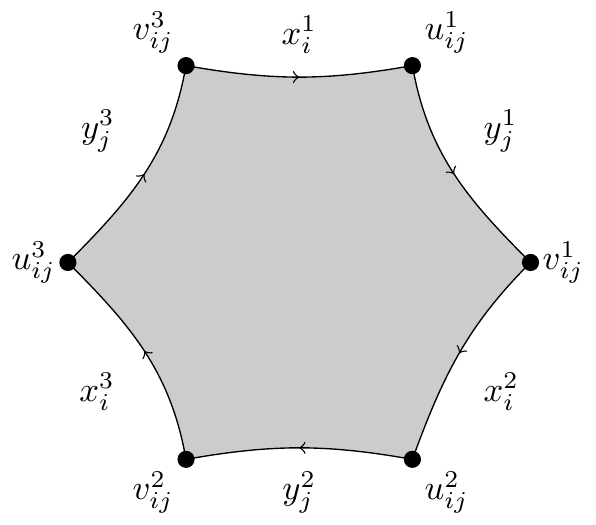}
	\caption{The labeling of $P_{ij}$} 
\label{fig:Fig4}
\end{figure}

The polygonal complex $X=X_{2m,L}$ is that obtained by gluing together all of 
the polygons $P_{ij}$ according to these edge labels, respecting orientation.
Note that the $2m$ edges of each $P_{ij}$ have pairwise distinct labels, so each polygon $P_{ij}$ injects into $X$. We define the group $H=H_{2m,L}$ to be the fundamental group of the polygonal complex $X$.

\begin{lemma}\label{lem:Links} 
The polygonal complex $X=X_{2m,L}$ has $2m$ vertices, and the link of each vertex in $X$ is the graph $L$.
\end{lemma}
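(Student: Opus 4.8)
The plan is to determine exactly which vertices of the polygons $P_{ij}$ become identified in $X$, and then to read off the link of each resulting vertex directly from the gluing data. First I would analyze the vertex identifications forced by the edge gluings. Recall that an edge labeled $x_i^k$ occurs in $P_{ij}$ precisely for those $j$ with $(x_i,y_j)\in E(L)$, and in each such polygon this edge runs from $v_{ij}^{k-1}$ to $u_{ij}^k$; likewise an edge labeled $y_j^k$ occurs in $P_{ij}$ for all $i$ with $(x_i,y_j)\in E(L)$, running from $u_{ij}^k$ to $v_{ij}^k$. Since the gluing identifies all copies of a given labeled edge respecting orientation, and each $P_{ij}$ injects into $X$, the vertex identifications are generated by identifying the terminal endpoints (and, separately, the initial endpoints) of all copies of each edge. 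In particular, gluing along $x_i^k$ identifies $u_{ij}^k$ for all $j$ with $(x_i,y_j)\in E(L)$, while gluing along $y_j^k$ identifies $u_{ij}^k$ for all $i$ with $(x_i,y_j)\in E(L)$; the analogous statements hold for the $v_{ij}^k$.

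The key observation is that the superscript $k$ is preserved by all of these identifications and that initial and terminal endpoints are never interchanged, so no $u$-vertex is identified with a $v$-vertex and no vertex of superscript $k$ with one of superscript $k'\neq k$. Fixing $k$, the relation ``$u_{ij}^k$ is identified with $u_{i'j'}^k$'' is generated by the condition that the edges $(x_i,y_j)$ and $(x_{i'},y_{j'})$ of $L$ share an endpoint in $L$. The transitive closure of this relation is connectivity in the line graph of $L$, and since $L$ is connected with at least one edge its line graph is connected. Hence all the $u_{ij}^k$ with the same $k$ collapse to a single vertex $u^k$, and symmetrically all the $v_{ij}^k$ with the same $k$ collapse to a single vertex $v^k$. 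This yields exactly the $2m$ vertices $u^0,\dots,u^{m-1},v^0,\dots,v^{m-1}$, establishing the count.

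It then remains to compute the links. The edges of $X$ incident to $u^k$ are the images of the edges meeting some $u_{ij}^k$, namely the edges $x_i^k$ (for $1\le i\le q_1$) and $y_j^k$ (for $1\le j\le q_2$); since $L$ is connected it has no isolated vertices, so all of these occur. The faces incident to $u^k$ are exactly the polygons $P_{ij}$ with $(x_i,y_j)\in E(L)$, and each such $P_{ij}$ contributes, at its corner $u_{ij}^k$, a single link edge joining $x_i^k$ to $y_j^k$. Thus $\link(u^k)$ has vertex set $\{x_i^k\}\sqcup\{y_j^k\}$ together with one edge from $x_i^k$ to $y_j^k$ for each $(x_i,y_j)\in E(L)$; relabeling $x_i^k\mapsto x_i$ and $y_j^k\mapsto y_j$ identifies this graph with $L$. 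The same computation at $v^k$, using the corners $v_{ij}^k$ whose two incident edges are $y_j^k$ and $x_i^{k+1}$, again reproduces $L$.

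I expect the main obstacle to be the bookkeeping in the first step: one must argue carefully that the identifications generated by the edge gluings are exactly the transitive closure described, that they preserve the superscript $k$ and the distinction between initial ($v$) and terminal ($u$) endpoints, and crucially that connectivity of $L$ (via connectivity of its line graph) is the precise hypothesis forcing the collapse to $2m$ vertices rather than more. Once the vertex set is correctly identified, the link computation is a routine transcription of the local picture of $P_{ij}$ recorded in Figure~\ref{fig:Fig4}.
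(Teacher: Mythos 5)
Your proof is correct and takes essentially the same route as the paper: determine the vertex identifications forced by the edge gluings using connectivity of $L$, conclude there are exactly $2m$ vertices $u^k,v^k$, and then read off each link from the corners of the polygons $P_{ij}$. It is in fact somewhat more careful than the paper's own proof, which simply asserts the collapse to $2m$ vertices from connectivity of $L$, whereas you make explicit the line-graph transitive-closure argument and the observation that the gluings never identify a $u$-vertex with a $v$-vertex or mix superscripts, so the count is exactly (not at most) $2m$.
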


\begin{proof}  
Fix $k$ with $1 \leq k \leq m$.  When we glue together all of the $P_{ij}$, since the graph $L$ is connected, all vertices $u_{ij}^k$ will be identified to a single vertex in $X$, say $u^k$, and all vertices $v_{ij}^k$ will be identified to a single vertex in $X$, say $v^k$.  Thus $X$ has $2m$ vertices $u^1,v^1,\dots,u^m,v^m$, which occur in this cyclic order going around the image of any $P_{ij}$ in $X$.  

We claim that at each vertex $u^k$ in $X$, the link is the graph $L$.  By construction, at $u^k$, the incoming edges are $x_1^k, \dots, x_{q_1}^k$ and the outgoing edges are $y_1^k,\dots,y_{q_2}^k$.  So the vertices of $\link(u^k)$ can be identified with the vertex set of $L$.
Now an incoming edge $x_i^k$ is connected to an outgoing edge $y_j^k$ in $\link(u^k)$ if and only if $P_{ij}$ is a face of $X$, which occurs if and only if $(x_i,y_j)$ in an edge of the graph $L$.  The claim follows.  Similarly, $\link(v^k)$ is $L$ for each vertex  $v^k$ in $X$.
\end{proof}

\begin{corollary}  
The universal cover of $X=X_{2m,L}$ is a simply-connected $(2m,L)$-complex.
\end{corollary}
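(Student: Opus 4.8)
The plan is to deduce the corollary directly from Lemma~\ref{lem:Links} by checking that the defining local structure of a $(2m,L)$-complex is inherited by the universal cover. First I would record that $X$ is itself a $(2m,L)$-complex. Its faces are exactly the images of the polygons $P_{ij}$; each $P_{ij}$ is a $2m$-gon and injects into $X$ because the $2m$ edges around its boundary carry pairwise distinct labels, so every face of $X$ is a $2m$-gon. Since Lemma~\ref{lem:Links} shows that the link of every vertex of $X$ is the simplicial graph $L$, the complex $X$ is a polygonal complex whose faces are all $2m$-gons and whose links are all $L$, i.e. a $(2m,L)$-complex.

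Next I would pass to the universal cover. Let $p\colon \widetilde X \to X$ denote the universal covering map. By construction $\widetilde X$ is simply-connected, and, as already used in Examples~\ref{egs:UniversalCover}(1), the universal cover of a polygonal complex is again a (simply-connected) polygonal complex. The map $p$ is a cellular covering and hence a local isomorphism of CW complexes: it carries each closed cell of $\widetilde X$ homeomorphically onto a closed cell of $X$ of the same dimension. In particular every face of $\widetilde X$ maps homeomorphically onto a face of $X$ and is therefore a $2m$-gon. Moreover, a covering map restricts to a homeomorphism on a suitable neighborhood of each vertex, and this restriction induces an isomorphism $\link_{\widetilde X}(\tilde\sigma) \cong \link_X(p(\tilde\sigma))$ for every vertex $\tilde\sigma$ of $\widetilde X$. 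Since every link in $X$ equals $L$, every link in $\widetilde X$ is isomorphic to $L$ as well. Combining these observations, $\widetilde X$ is a simply-connected polygonal complex all of whose faces are $2m$-gons and all of whose links are $L$, which is exactly a simply-connected $(2m,L)$-complex.

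The point requiring the most care is the assertion that $\widetilde X$ is a genuine polygonal complex, that is, that it is regular and satisfies the intersection property, and not merely a CW complex. Regularity and the preservation of the simplicial link $L$ are local conditions that follow immediately from the local-isomorphism property of $p$, but the intersection property is global and cannot be read off from the links alone. I would resolve this by invoking non-positive curvature: because each face of $X$ is metrized as a regular right-angled $2m$-gon, each corner contributes an angle $\pi/2$ to the relevant link, and since $L$ is bipartite it has girth at least $4$, so every embedded loop in every link has length at least $2\pi$. By Gromov's link condition $X$ is non-positively curved, hence $\widetilde X$ is $\CAT(0)$; the uniqueness of geodesics in a $\CAT(0)$ space then prevents two closed cells of $\widetilde X$ from meeting in more than one cell, giving the intersection property and completing the verification that $\widetilde X$ is a simply-connected $(2m,L)$-complex.
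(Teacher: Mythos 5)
Your argument is correct, and its first two paragraphs are precisely the proof the paper leaves implicit: the corollary is stated as an immediate consequence of Lemma~\ref{lem:Links}, the point being that a covering map preserves the local data --- closed cells lift homeomorphically (so all faces of $\widetilde X$ are $2m$-gons and attaching maps stay injective) and links are unchanged (so all links of $\widetilde X$ are $L$). Where you go beyond the paper is the third paragraph: the paper never addresses the intersection property at all, and your observation that it cannot be read off from links is a genuine one. In fact the situation is sharper than you state: $X$ itself \emph{fails} the intersection property, since every face of $X$ contains all $2m$ vertices $u^1,v^1,\dots,u^m,v^m$, and two faces $P_{ij}$ and $P_{ij'}$ with $j\neq j'$ share the $m$ distinct edges labelled $x_i^1,\dots,x_i^m$. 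Consequently your opening claim that $X$ is itself a $(2m,L)$-complex is false under Definition~\ref{def:Polygonalcomplex} (the paper commits the same abuse when it calls $X$ a ``polygonal complex''); fortunately this claim is never used, because you derive the faces, links and regularity of $\widetilde X$ directly from the corresponding properties of $X$ rather than from $X$ being a $(2m,L)$-complex. Your curvature argument --- right-angled corners plus girth at least $4$ from bipartiteness give the Gromov link condition, so $\widetilde X$ is $\CAT(0)$, and convexity of cells together with uniqueness of geodesics forces two closed cells to meet in at most one cell --- is the standard way to settle exactly the point the paper glosses over, and it explains why the corollary holds for $\widetilde X$ even though the analogous statement fails for $X$.
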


\begin{example}
Let $L$ be the complete bipartite graph $K_{3,4}$. The vertex set is 
$S_1 \sqcup S_2$, where $|S_1|=3$ and $|S_2|=4$. Label the vertices of $S_1$ by
$x_1, x_2, x_3$ and those of $S_2$ by $y_1, y_2, y_3, y_4$.
For each edge in $L$, with endpoints $x_i$ and $y_j$, label the edges of
an $8$-gon cyclically by the word $x_i^1 y_j^1 x_i^2 y_j^2 x_i^3 x_i^3 x_i^4 y_j^4$. 
So, we have twelve polygons corresponding to the following words:

\medskip

\centering
\noindent 
$x_1^1 y_1^1 x_1^2 y_1^2 x_1^3 y_1^3 x_1^4 y_1^4 $ $\quad$ 
$x_2^1 y_1^1 x_2^2 y_1^2 x_2^3 y_1^3 x_2^4 y_1^4 $ $\quad$
$x_3^1 y_1^1 x_3^2 y_1^2 x_3^3 y_1^3 x_3^4 y_1^4 $ 

\noindent 
$x_1^1 y_2^1 x_1^2 y_2^2 x_1^3 y_2^3 x_1^4 y_2^4 $ $\quad$
$x_2^1 y_2^1 x_2^2 y_2^2 x_2^3 y_2^3 x_2^4 y_2^4 $ $\quad$
$x_3^1 y_2^1 x_3^2 y_2^2 x_3^3 y_2^3 x_3^4 y_2^4 $ 

\noindent 
$x_1^1 y_3^1 x_1^2 y_3^2 x_1^3 y_3^3 x_1^4 y_3^4 $ $\quad$
$x_2^1 y_3^1 x_2^2 y_3^2 x_2^3 y_3^3 x_2^4 y_3^4 $ $\quad$
$x_3^1 y_3^1 x_3^2 y_3^2 x_3^3 y_3^3 x_3^4 y_3^4 $ 

\noindent 
$x_1^1 y_4^1 x_1^2 y_4^2 x_1^3 y_4^3 x_1^4 y_4^4 $ $\quad$
$x_2^1 y_4^1 x_2^2 y_4^2 x_2^3 y_4^3 x_2^4 y_4^4 $ $\quad$
$x_3^1 y_4^1 x_3^2 y_4^2 x_3^3 y_4^3 x_3^4 y_4^4 $ 
\end{example}

\subsection{Embedding in the lattice of Bourdon's building}\label{sec:CoveringGamma}

In this section we consider the special case $L = K_{q_1,q_2}$ with $q_1,q_2\geq 2$.  We show that the group $H$ constructed in Section~\ref{sec:H} above embeds with index $q_1 q_2$ in the lattice $\G$ of $I_{2m,L}=I_{2m,\q}$ (see Examples~\ref{egs:Lattices}).

Let $X = X_{2m,L}$ be as constructed in Section~\ref{sec:H} above. Let $\cX$ be the scwol associated to $X$ and $\cP$ be the scwol associated to the polygon~$P$ as in Examples~\ref{egs:Scwols}(1a). Let $H(\cX)$ be the trivial complex of groups over $\cX$ and let $G(\cP)$ be the complex of groups over $\cP$ as constructed in Example~\ref{egs:ComplexesGroups}(2) above. Recall from Examples~\ref{egs:Lattices}(2) that the fundamental group of $G(\cP)$ is the lattice $\G=\G_{2m}(G_1,G_2)$ of $\Aut(I_{2m,L})$.   

Let us label the polygon $P$ underlying $G(\cP)$ as follows. Orient the edges of $P$ cyclically and label the resulting $2m$-cycle going around the boundary of $P$ by the word $e_1 f_1 e_2 f_2 \dots e_m f_m$.  For $k \in \Z/m\Z$, label the vertex of $P$ with incoming edge $e_k$ and outgoing edge $f_k$ by $u_k$, and label the vertex of $P$ with incoming edge $f_k$ and outgoing edge $e_{k+1}$ by $v_k$. Figure \ref{fig:Fig5} shows the labeling of $P$ for $m=3$.

\begin{figure}[t]
	\centering
		\includegraphics{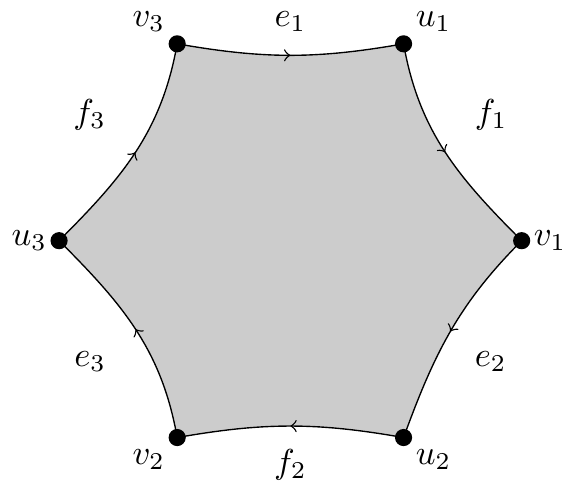}
	\caption{The labeling of $P$} 
\label{fig:Fig5}
\end{figure}

We now prove:

\begin{proposition}\label{prop:CoveringGamma}  
There is a covering of complexes of groups $\Phi:H(\cX) \to G(\cP)$ with $q_1q_2$ sheets. 
\end{proposition}

\begin{proof} 
In addition to notation introduced above, we continue notation from Section~\ref{sec:H}.

We first define a non-degenerate morphism of scwols $p:\cX \to \cP$.  This will be induced by the natural projection $X \to P$. There is an isometry from each polygon $P_{ij}$ to $P$ given by, for $1 \leq k \leq m$, sending the edge $x_i^k$ to $e_k$ and the edge $y_j^k$ to $f_k$, respecting orientations.  Note that the vertex $u_{ij}^k$ maps to $u_k$ and the vertex $v_{ij}^k$  maps to~$v_k$. Since $X$ is obtained from the polygons $P_{ij}$ by gluing them together according to (oriented) edge labels, this collection of isometries $P_{ij} \to P$ induces a projection $p: X \to P$.  By abuse of notation, we write $p:\cX \to \cP$ for the induced map of scwols.  Recall from the proof of Lemma~\ref{lem:Links} that the $2m$ vertices of $X$ are $u^1, v^1, u^2, v^2, \dots, u^m, v^m$.  For each $1 \leq k \leq m$, we have $p(u^k) = u_k$ and $p(v^k) = v_k$.  It is straightforward to verify that $p:\cX \to \cP$ is a non-degenerate morphism of scwols.

We now construct a covering of complexes of groups $\Phi:H(\cX) \to G(\cP)$ over $p:\cX \to \cP$.  For this, we need to construct a family of elements $\{ \phi(a) \in G_{t(p(a))} \mid a \in E(\cX)\}$ which satisfies the conditions in Definition~\ref{def:Covering}.  We begin by labeling the edges of the scwol $\cX$ as follows:
\begin{itemize}
\item {$a_{ij}^k$} goes from the midpoint of $x_i^k$ to the vertex $u^k$;
\item {$b_{ij}^k$} goes from the barycenter of $P_{ij}$ to the midpoint of $x_i^k$;
\item {$c_{ij}^k$} goes from the midpoint of $y_j^k$ to the vertex $u^k$;
\item {$d_{ij}^k$} goes from the barycenter of $P_{ij}$ to the midpoint of $y_j^k$;
\item {$e_{ij}^k$} goes from the midpoint of $x_j^{k+1}$ to the vertex $v^k$; and
\item {$f_{ij}^k$} goes from the midpoint of $y_j^k$ to the vertex $v^k$.
\end{itemize}
Noting carefully the order of composition in Definition~\ref{def:Scwol}, the pairs of edges $(a_{ij}^k, b_{ij}^k)$ and $(c_{ij}^k, d_{ij}^k)$ are composable, with $\alpha_{ij}^k:=a_{ij}^k b_{ij}^k = c_{ij}^k d_{ij}^k$ the edge from the barycenter of $P_{ij}$ to the vertex $u^k$ of $X$.  Similarly, the pairs of edges $(e_{ij}^k, b_{ij}^{k+1})$ and $(f_{ij}^{k}, d_{ij}^k)$ are composable, with $\beta_{ij}^k:=e_{ij}^k b_{ij}^{k+1} = f_{ij}^{k} d_{ij}^k$ the edge from the barycenter of $P_{ij}$ to the vertex $v^k$ of $X$. Figure \ref{fig:Fig6} shows the labeling of (the image of) $P_{ij}$ at the sector between the midpoint of $x^k_i$ and the midpoint of $x^{k+1}_i$. 

\begin{figure}[t]
	\centering
		\includegraphics{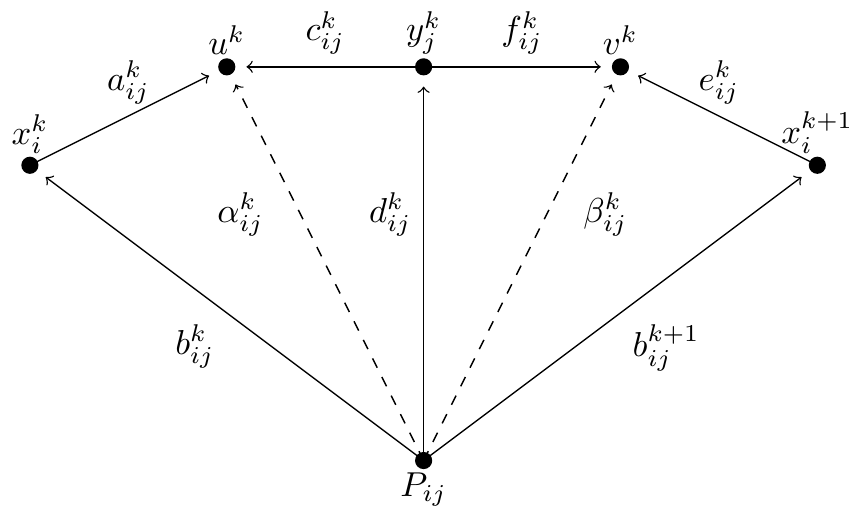}
	\caption{The labeling of (the image of) $P_{ij}$} 
\label{fig:Fig6}
\end{figure}

In the complex of groups $G(\cP)$, for $1 \leq k \leq m$ we specify that the local group at the midpoint of edge $e_k$ is $G_2$, and the local group at the midpoint of edge $f_k$ is $G_1$.  Enumerate the elements of $G_1$ as $\{ g_{1,1}, \dots, g_{1,q_1}\}$ and those of $G_2$ as $\{ g_{2,1}, \dots, g_{2,q_2}\}$.  We identify $G_1$ and $G_2$ with their images in the direct product $G_1 \times G_2$, and write the elements of $G_1 \times G_2$ as $\{ g_{1,i}g_{2,j} \mid 1 \leq i \leq q_1, 1 \leq j \leq q_2 \}$.

We are now ready to define the group element $\phi(a) \in G_{t(p(a))}$, for each $a \in E(\cX)$.  For $1 \leq i \leq q_1$, $1 \leq j \leq q_2$ and $k \in \Z/m\Z$, we put:
\[ \phi(a_{ij}^k) = \phi(d_{ij}^k) = \phi(e_{ij}^k) = g_{1,i} \in G_1, \quad \phi(b_{ij}^k) = \phi(c_{ij}^k) = \phi(f_{ij}^{k-1}) = g_{2,j} \in G_2 \]
and
\[ \phi(\alpha_{ij}^k) = \phi(\beta_{ij}^k) = g_{1,i} g_{2,j} \in G_1 \times G_2.\]
Since the elements $g_{1,i}$ and $g_{2,j}$ commute in $G_1 \times G_2$, Condition (1) of Definition~\ref{def:Covering} holds.  For Condition (2), let $\s \in V(\cX)$ and let $b \in E(\cP)$ be such that $t(b) = p(\s)$.  Then the quotient $G_{t(b)}/\psi_b(G_{i(b)}$ is one of $G_1/\{1\} \cong G_1$, $G_2/\{1\} \cong G_2$, $(G_1 \times G_2)/\{1\} \cong G_1 \times G_2$, $(G_1 \times G_2)/G_1 \cong G_2$ or $(G_1 \times G_2)/G_2 \cong G_1$.  In each case, it may be checked that the elements of the quotient are enumerated by the set $\{ \phi(a) \mid p(a) = b \mbox{ and } t(a) = \s \}$.  Hence Condition (2) holds.  Thus we have constructed a covering of complexes of groups $\Phi:H(\cX) \to G(\cP)$.

Finally, we show that the covering $\Phi$ has $q_1 q_2$ sheets.  Since $L = K_{q_1, q_2}$ has $q_1q_2$ edges, the polygonal complex  $X$ has $q_1 q_2$ faces.  Let $\tau \in V(\cP)$ be the barycenter of the face of $P$.  Then the set $p^{-1}(\tau)$ has $q_1 q_2$ elements, one for each face in $X$.  Now the local group $G_\tau$ is trivial, so the number of sheets of $\Phi$ is $ |p^{-1}(\tau)| = q_1 q_2$, as required.
\end{proof}

We are now able to prove our first main result:

\Embeddingingamma* 

\begin{proof}
Combining Theorem~\ref{thm:Coverings} and Proposition~\ref{prop:CoveringGamma} tells us $H$ is an index $q_1q_2$ subgroup of $\G$. The fact that $H$ is a maximal torsion-free subgroup is a consequence of Corollary~\ref{cor:Maximality}. 
\end{proof}

\subsection{Embedding in the Coxeter group}\label{sec:CoveringW}

In this section we return to the setting where $L$ is any finite, connected, simplicial and bipartite graph. We show that the group $H$ embeds with index $2m$ in the Coxeter group $W$.

Let $X = X_{2m,L}$ and $H = H_{2m,L}$ be as constructed in Section~\ref{sec:H} above. Let $\cXop$ be the opposite scwol associated to $X$, as in Examples~\ref{egs:Scwols}(1b), and let $\cK$ be the scwol associated to the chamber $K$ for $(W_{2m,L},S_L)$, as in Examples~\ref{egs:Scwols}(2). We work with the opposite scwol in this section so that the natural map from the barycentric subdivision of $X$ to $K$ induces a non-degenerate morphism of scwols. Let $H(\cXop)$ be the trivial complex of groups over $\cXop$ and let $G(\cK)$ be the complex of groups over $\cK$ with fundamental group $W=W_{2m,L}$ constructed in Section~\ref{sec:Complexesofgroups}. We now prove:

\begin{proposition}\label{prop:CoveringW}  
There is a covering of complexes of groups $\Psi:H(\cXop) \to G(\cK)$ with $2m$ sheets.
\end{proposition}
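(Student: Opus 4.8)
The plan is to mimic the structure of the proof of Proposition~\ref{prop:CoveringGamma}, but now building a covering over the opposite scwol $\cXop$ and mapping into the Davis chamber complex $G(\cK)$ rather than into the Bourdon polygon $G(\cP)$. The essential difference is that the edges of $\cXop$ are oriented \emph{from} lower-dimensional cells \emph{to} higher-dimensional cells, matching the orientation-by-inclusion-of-type in $\cK$. So first I would construct a non-degenerate morphism $f:\cXop \to \cK$, induced by the natural ``collapsing'' map from the barycentric subdivision $X'$ of $X$ onto the chamber $K$. Concretely, the barycenter of each face $P_{ij}$ of $X$ should map to the cone point of $K$ (type $\emptyset$); the midpoints of the edges labelled $x_i^k$ and $y_j^k$ should map to the vertices of $K$ of type $\{x_i\}$ and $\{y_j\}$ respectively; and the vertices $u^k, v^k$ of $X$ (which all have link $L$, by Lemma~\ref{lem:Links}) should map to the vertices of $K$ of type $\{x_i,y_j\}$ as the edge $(x_i,y_j)$ varies over $E(L)$. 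I would then verify conditions (1)--(3) of Definition~\ref{def:MorphismScwols}; the key point for non-degeneracy (condition (3)) is that the edges of $\cXop$ issuing from a fixed vertex $\s$ biject onto the edges of $\cK$ issuing from $f(\s)$, which reduces to the local structure of the links, precisely the content of Lemma~\ref{lem:Links}.

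\textbf{Next}, I would define the covering data $\Phi = \{\phi(a) \in G_{f(t(a))} \mid a \in E(\cXop)\}$. Since the local groups of $G(\cK)$ are the special subgroups $W_T$ (trivial, $Z_2$, or $D_m$), and since $H(\cXop)$ is trivial, I would assign to each edge $a$ a group element recording ``which copy'' of $K$ inside the Davis complex the corresponding cell of $\widetilde{X}$ lands in. For an edge of $\cXop$ running from the barycenter of $P_{ij}$ up to the midpoint of $x_i^k$ (type $\{x_i\}$), the target local group is $W_{\{x_i\}} \cong Z_2$, and there is essentially a single meaningful choice; the interesting assignments are at the edges landing at vertices of type $\{x_i,y_j\}$, where the target group is the dihedral group $D_m$ of order $2m$. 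Here the index $k \in \Z/m\Z$, together with the two choices of incoming/outgoing side (the distinction between the vertices $u^k$ and $v^k$), should be used to enumerate the $2m$ elements of $D_m$. The consistency condition~(1) of Definition~\ref{def:Covering}, $\phi(ab) = \phi(a)\,\psi_{f(a)}(\phi(b))$, must then be checked on each composable pair, which here means comparing the $D_m$-value at a type-$\{x_i,y_j\}$ vertex against the products of the intermediate $Z_2$-values along the two faces of each triangle of $X'$.

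\textbf{The main obstacle} I anticipate is condition~(2) of Definition~\ref{def:Covering}: the bijectivity of the map $\Phi_{\s/b}$ onto the coset space $G_{f(\s)}/\psi_b(G_{i(b)})$. The delicate case is when $f(\s)$ has type $\{x_i,y_j\}$, so $G_{f(\s)} = D_m$, and $b$ is the edge of $\cK$ running from a type-$\{x_i\}$ vertex up to this type-$\{x_i,y_j\}$ vertex; then $\psi_b(G_{i(b)})$ is the image of $Z_2$ in $D_m$, a reflection subgroup of order $2$, and the quotient $D_m/Z_2$ has $m$ cosets. I must check that the preimages in $\cXop$ of $b$ terminating at the various $\s$ mapping to $f(\s)$ are enumerated exactly once by these $m$ cosets — this is the heart of the argument and is where the cyclic labelling $x_i^1 y_j^1 \cdots x_i^m y_j^m$ around $\partial P_{ij}$ does the work, each of the $m$ sides of type $\{x_i\}$ contributing one coset. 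Once condition~(2) is verified at every vertex type, $\Phi$ is a genuine covering. \textbf{Finally}, I would count sheets using the vertex $\tau \in V(\cK)$ of type $\{x_i,y_j\}$ (a midpoint vertex), where the local group $G_\tau = D_m$ has order $2m$: since each such $\tau$ has a single preimage vertex in $\cXop$ (the vertices $u^k, v^k$ all collapsing appropriately, with exactly one preimage per type by the link computation), the number of sheets is $|f^{-1}(\tau)| \cdot |G_\tau| = 1 \cdot 2m = 2m$, as claimed. Alternatively, and more cleanly, I would evaluate at the cone-point vertex where $G_\tau$ is trivial, giving $|f^{-1}(\text{cone point})| = 2m$, since $X$ has $2m$ vertices $u^1,v^1,\dots,u^m,v^m$ by Lemma~\ref{lem:Links}, each mapping to the cone point of $K$.
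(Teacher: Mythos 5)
Your construction of the morphism $f:\cXop\to\cK$ is backwards, and this is a genuine error, not a matter of exposition. You send the barycenter of each face $P_{ij}$ to the cone point of $K$, and the vertices $u^k,v^k$ of $X$ to vertices of $K$ of type $\{x_i,y_j\}$. The correct assignment (the one the paper uses) is the opposite: the vertices $u^k,v^k$ have type $\emptyset$ and map to the cone point, the midpoints of the edges $x_i^k$ and $y_j^k$ map to the vertices of types $\{x_i\}$ and $\{y_j\}$, and the barycenter of $P_{ij}$ maps to the vertex of type $\{x_i,y_j\}$. Your map fails on two counts. First, it is not a morphism of scwols: in $\cXop$ edges run from lower- to higher-dimensional cells, so a vertex $u^k$ has only \emph{outgoing} edges, whereas a type-$\{x_i,y_j\}$ vertex of $\cK$ is terminal, with only \emph{incoming} edges, so condition (1) of Definition~\ref{def:MorphismScwols} cannot hold. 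Second, it is not even well defined: $u^k$ lies in every face $P_{ij}$, so there is no single type $\{x_i,y_j\}$ to assign to it --- ``as the edge $(x_i,y_j)$ varies'' does not describe a function. The geometric point you are missing is that the Davis chamber $K$ is the closed star of a vertex in the barycentric subdivision of a $(2m,L)$-complex (it is the cone on $L'$, and $L$ is the link of a vertex), not a subdivided $2m$-gon; you have carried over the picture from the Bourdon setting of Proposition~\ref{prop:CoveringGamma}, where the fundamental domain $\cP$ \emph{is} the subdivided polygon and the trivial local group sits at the face barycenter. Here the trivial local group sits at the cone point, which is a vertex of $X$. Your proposal is in fact internally inconsistent on this point: your final sheet count asserts that the $2m$ vertices of $X$ map to the cone point, contradicting the map you defined, under which the preimage of the cone point would be the set of face barycenters, of cardinality $|E(L)|$, not $2m$.

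Beyond this, the covering data is never actually constructed. With the correct $f$, the heart of the paper's proof is the explicit assignment of elements $\phi(A)$ in the dihedral groups $W_{ij}=W_{\{x_i,y_j\}}$ via the alternating words $w_k(x_i,y_j)$ and $w_k(y_j,x_i)$, chosen so that the cocycle condition (1) and the coset-bijection condition (2) of Definition~\ref{def:Covering} hold simultaneously: the $m$ edges of $\cXop$ running from the midpoints of $x_i^1,\dots,x_i^m$ into the barycenter of $P_{ij}$ must hit a full set of representatives of the $m$ cosets of $W_{\{x_i\}}$ in $W_{ij}$, and the $2m$ composition edges out of the vertices of $X$ must enumerate all of $W_{ij}$, all while remaining compatible with the $\{1,x_i\}$- and $\{1,y_j\}$-valued assignments on the shorter edges. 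You correctly identify this as the main obstacle and correctly locate where the cyclic labelling of $\partial P_{ij}$ enters, but ``the index $k$ together with the incoming/outgoing choice should be used to enumerate the $2m$ elements of $D_m$'' is a statement of what must be arranged, not an argument; exhibiting a choice satisfying both conditions at once is exactly the content of the proof and cannot be waved through.
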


\begin{proof}  
We first construct a non-degenerate morphism of scwols $f:\cXop \to \cK$.  For this, we assign types to the vertices of $\cXop$ as follows.  Let $\s \in V(\cXop)$.  If $\s$ is a vertex of $X$, then $\s$ has type $\emptyset$.  If $\s$ is the midpoint of the edge $x_i^k$ of $X$ then $\s$ has type $\{ x_i \}$ and if $\s$ is the midpoint of the edge $y_j^k$ of $X$ then $\s$ has type $\{ y_j \}$.  Finally, if $\s$ is the barycenter of the face $P_{ij}$ of $X$ then $\s$ has type $\{x_i, y_j\}$.  Then for each edge $a \in E(\cXop)$, $i(a)$ is of type $T'$ and $t(a)$ is of type $T$ where $T' \subseteq T$ are spherical subsets of $S = S_L$.  The map $f:\cX \to \cK$ is that induced by sending each vertex of $\cXop$ which has type $T$ to the unique vertex of $\cK$ which has type $T$.  It is straightforward to check that $f$ is a non-degenerate morphism of scwols.

We now construct a covering of complexes of groups $\Psi:H(\cXop) \to G(\cK)$ over $f:\cXop \to \cK$.  We first label the edges of the scwol $\cXop$.  Although $E(\cXop) = E(\cX)$, we will use different labels to those in the proof of Proposition~\ref{prop:CoveringGamma} above.  Our labeling for $\cXop$ is as follows:
\begin{itemize}
\item The edges $A_{ij}^1,\dots,A_{ij}^m$ have terminal vertex the barycenter of $P_{ij}$.  Their initial vertices are the midpoints of the $m$ edges $x_{i}^1, y_j^m, x_j^{m},\dots,y_j^\ell, x_i^{\ell}$, respectively, if $m = 2\ell - 3$ is odd, and the $m$ edges $x_{i}^1, y_j^m, x_j^{m},\dots, x_i^{\ell+1}, y_j^{\ell}$, respectively, if $m = 2\ell - 2$ is even.
\item The edges $B_{ij}^1,B_{ij}^3,\dots,B_{ij}^{2m-1}$ have terminal vertices equal to the initial vertices of $A_{ij}^1,\dots,A_{ij}^m$, respectively.  Their initial vertices are $u^1, v^m, u^m, \dots, v^\ell, u^\ell$, respectively, if $m = 2\ell - 3$ is odd, and $u^1, v^m, u^m, \dots, u^{\ell+1},v^\ell$, respectively, if $m = 2\ell - 2$ is even.
\item The edges $B_{ij}^2,B_{ij}^4,\dots,B_{ij}^{2m}$ have terminal vertices equal to the initial vertices of $A_{ij}^1,\dots,A_{ij}^m$, respectively.  Their initial vertices are $v^m, u^m, \dots, u^\ell,v^{\ell-1}$, respectively, if $m = 2\ell - 3$ is odd, and $v^m, u^m, \dots, v^\ell, u^{\ell}$, respectively, if $m = 2\ell - 2$ is even.
\item The edges $C_{ij}^1,\dots,C_{ij}^m$ have terminal vertex the barycenter of $P_{ij}$.  Their initial vertices are the midpoints of the $m$ edges $y_j^1,x_i^2,y_j^2,\dots,x_i^\ell,y_j^{\ell-1}$, respectively, if $m = 2\ell - 3$ is odd, and the $m$ edges $y_j^1,x_i^2,y_j^2,\dots,y_j^\ell,x_i^\ell$, respectively, if $m = 2\ell - 2$ is even.
\item The edges $D_{ij}^1,D_{ij}^3,\dots,D_{ij}^{2m-1}$ have terminal vertices equal to the initial vertices of $C_{ij}^1,\dots,C_{ij}^m$, respectively.  Their initial vertices are $u^1, v^1, u^2, \dots, u^{\ell-1}$, respectively, if $m = 2\ell - 3$ is odd, and $u^1, v^1, u^2, \dots, v^{\ell-1}$, respectively, if $m = 2\ell - 2$ is even.
\item The edges $D_{ij}^2,D_{ij}^4,\dots,D_{ij}^{2m}$ have terminal vertices equal to the initial vertices of $C_{ij}^1,\dots,C_{ij}^m$, respectively.  Their initial vertices are $v^1, u^2, v^2,\dots, v^{\ell-1}$, respectively, if $m = 2\ell - 3$ is odd, and $v^1, u^2, v^2,\dots, u^\ell$, respectively, if $m = 2\ell - 2$ is even.
\end{itemize}
Noting carefully the order of composition in Definition~\ref{def:Scwol}, the compositions of edges are:
\begin{itemize}
\item two distinguished edges $\varepsilon_{ij}:=A_{ij}^1 B_{ij}^1 = C_{ij}^1 D_{ij}^1$ and $\varepsilon'_{ij}:=A_{ij}^m B_{ij}^{2m} = C_{ij}^m D_{ij}^{2m}$; 
\item for $1 \leq k \leq m-1$, an edge $\gamma_{ij}^k := A^k_{ij} B^{2k}_{ij} = A^{k+1}_{ij} B^{2k+1}_{ij}$; and
\item for $1 \leq k \leq m-1$, an edge  $\delta_{ij}^k := C^k_{ij} D^{2k}_{ij} = C^{k+1}_{ij} D^{2k+1}_{ij}$.
\end{itemize} 

Figure \ref{fig:Fig7} shows the labeling of the (image of the) whole polygon $P_{ij}$ for the case $m=3$. We neglect to label the compositions, but leave them drawn as dotted lines. 

\begin{figure}[t]
	\centering
		\includegraphics{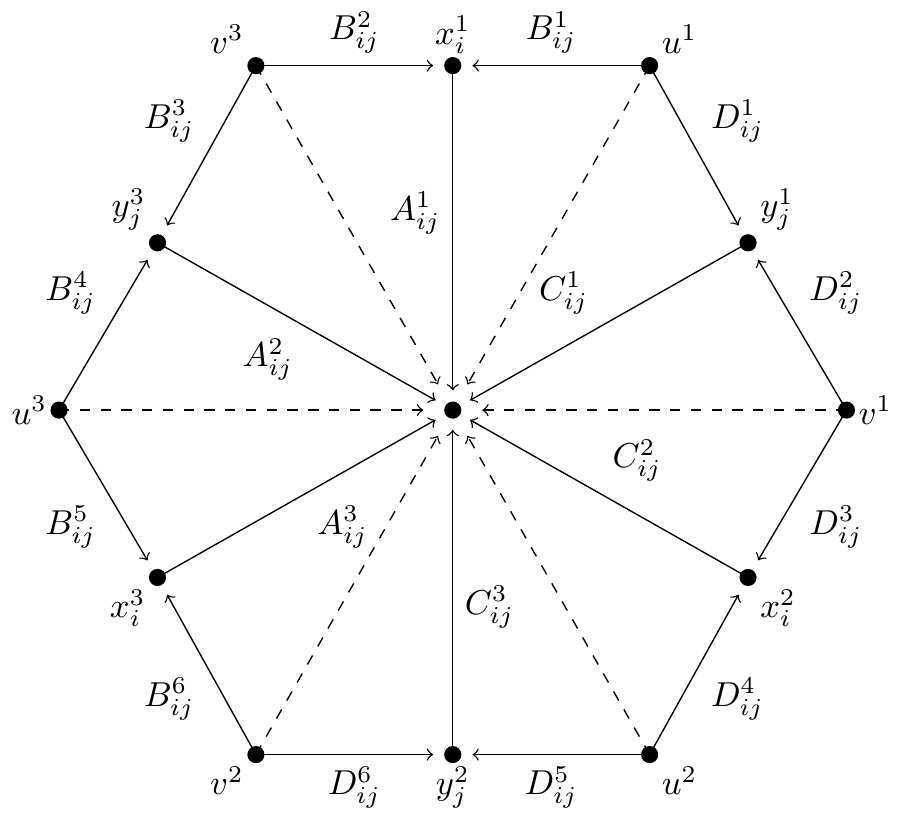}
	\caption{The labeling of (the image of) $P_{ij}$} 
\label{fig:Fig7}
\end{figure}

We next establish some notation and recall some facts concerning finite dihedral groups.  Let $(x_i,y_j)$ be an edge of $L$ and recall that the special subgroup $W_{\{x_i,y_j\}}$ is dihedral of order $2m$.  To simplify notation, write $W_{ij}$ for $W_{\{x_i,y_j\}}$.  For $1 \leq k \leq m$, we denote by $w_k(x_i,y_j)$ the element of $W_{ij}$ given by the alternating product $x_i y_j x_i \dots$ which starts with $x_i$ and has $k$ letters.  Similarly define $w_k(y_j,x_i)$ to start with $y_j$.  Then if $w$ is a nontrivial element of $W_{ij}$, we have $w = w_k(x_i,y_j)$ or $w = w_k(y_j,x_i)$ for some $k$ with $1 \leq k \leq m$, and this expression for $w$ is unique except for the case $w = w_m(x_i,y_j) = w_m(y_j,x_i)$.  For example, if $m = 3$ then the $6$ elements of $W_{ij}$ are $1$, $x_i$, $y_j$, $x_iy_j$, $y_jx_i$ and $x_iy_jx_i = y_jx_iy_j$. 

We are now ready define the family $\{ \phi(A) \in G_{t(f(A))} \mid A \in E(\cXop) \}$.  Recall that in $G(\cK)$, the local group at $G_\s$ is the special subgroup $W_T$ where $\s$ is of type $T$.  We describe the assignment of group elements $\phi(A) \in G_{t(f(A))}$ according to the type of the vertex $t(A) \in V(\cXop)$.
First suppose that $t(A)$ has type $\{x_i\}$.  Then $A$ is the edge $B^{2k-1}$ or $B^{2k}$ with $k$ odd, or $D^{2k-1}$ or $D^{2k}$ with $k$ even.  We put $\phi(B^{2k-1}) = 1$ and $\phi(B^{2k}) = x_i$ if $k$ is odd and $\phi(D^{2k-1}) = 1$ and $\phi(D^{2k}) = x_i$ if $k$ is even.  The assignment is similar if $t(A)$ has type $\{y_j\}$: we put $\phi(B^{2k-1}) = 1$ and $\phi(B^{2k}) = y_j$ if $k$ is even and $\phi(D^{2k-1}) = 1$ and $\phi(D^{2k}) = y_j$ if $k$ is odd.  

Now suppose that $t(A)$ has type $\{x_i,y_j\}$.  Then we put $\phi(A_{ij}^1) = \phi(C_{ij}^1) = 1$, and for $2 \leq k \leq m$ we put $\phi(A_{ij}^k) = w_{k-1}(x_i,y_j)$ and $\phi(C_{ij}^k) = w_{k-1}(y_j,x_i)$.  This takes care of all edges $A$ with $t(A)$ of type $\{x_i,y_j\}$ and $i(A)$ of type either $\{x_i\}$ or $\{y_j\}$.  The remaining edges are the compositions, and so have initial vertex of type $\emptyset$.  We define $\phi(\varepsilon_{ij}) = 1$ and $\phi(\varepsilon'_{ij}) = w_m(x_i,y_j) = w_m(y_j,x_i)$, and for $1 \leq k \leq m-1$ we define $\phi(\gamma_{ij}^k) = w_k(x_i,y_j)$ and $\phi(\delta_{ij}^k) = w_k(y_j,x_i)$. Figure \ref{fig:Fig8} shows the covering locally at the edges $x^2_i$ and $y^2_j$ of $P_{ij}$ for the case $m=3$. Again the color of vertices denotes type. Thicker arrows have been drawn to indicate where a ``folding" takes place.  

\begin{figure}[t]
	\centering
		\includegraphics{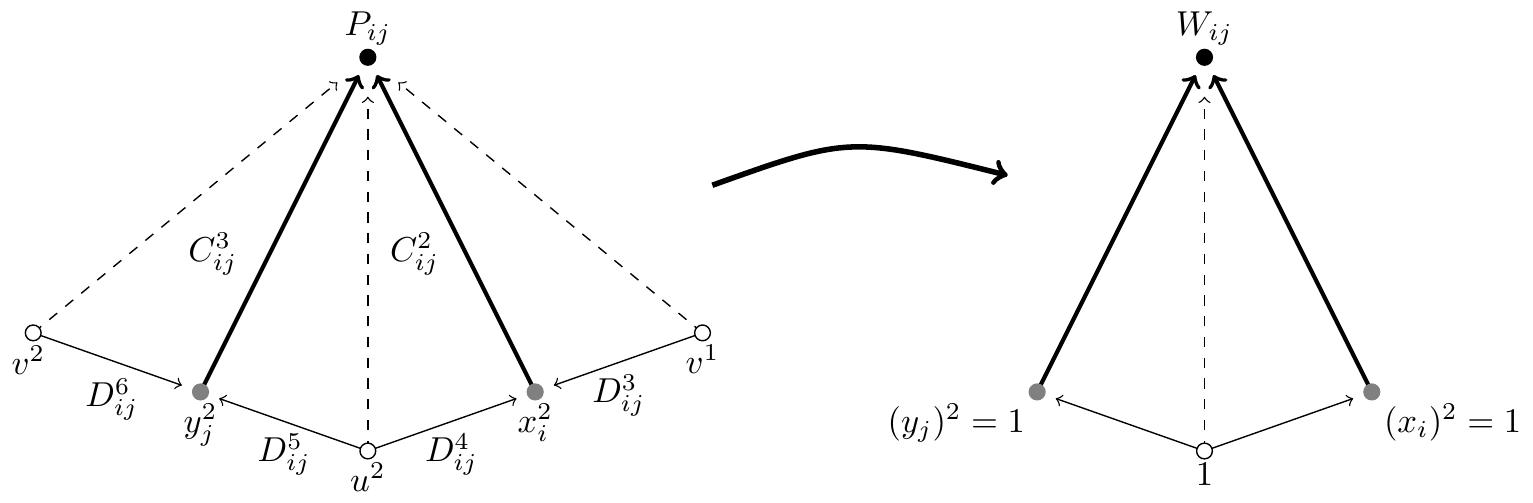}
	\caption{The covering of $G(\cK)$ by $\cXop$} 
\label{fig:Fig8}
\end{figure}

The verification of Condition (1) in Definition~\ref{def:Covering} is straightforward, keeping in mind the order of composition of edges in Definition~\ref{def:Scwol}, since for $m \geq 3$ the elements $x_i$ and $y_j$ do not commute in $W_{ij}$.  For example, we have $A^2_{ij}B^4_{ij} = \gamma^2_{ij}$, and $\phi(A^2_{ij}) = x_i$, $\phi(B^4_{ij}) = y_j$ and $\phi(\gamma^2_{ij}) = x_i y_j$.  

For Condition (2) in Definition~\ref{def:Covering}, let $\s \in V(\cXop)$ and let  $b \in E(\cK)$ be such that $t(b) = f(\s)$.  Then $\s$ has type $\{x_i\}$, $\{y_j\}$ or $\{x_i,y_j\}$.  If $\s$ has type $\{x_i\}$, there are exactly two edges in $\cXop$ which have terminal vertex $\s$, say $A$ and $A'$.  These are both mapped by $f$ to $b$, and by construction the sets $\{ \phi(A), \phi(A') \}$ and $\{1,x_i \}$ are equal.  Hence $A \mapsto \phi(A)$ induces the desired bijection from $\{ A \in E(\cXop) \mid f(A) = b \mbox{ and } t(A) = \s \}$ to $G_{f(\s)}/\psi_b(G_{i(b)} = W_{\{x_i\}}/W_\emptyset \cong W_{\{x_i\}} = \{1,x_i\}$.  The argument is similar if $\s$ has type $\{y_j\}$.

If $\s$ has type $\{x_i,y_j\}$ we consider cases according to the type of $i(b)$.  Suppose $i(b)$ has type $\{x_i\}$.  By definition of $f$, the set $\{ A \in E(\cXop) \mid f(A) = b \mbox{ and } t(A) = \s \}$ is equal to 
\[
\{ A \in E(\cXop) \mid i(A) \in \{ x_i^1,\dots,x_i^m\} \mbox{ and }t(A) = \s\}.
\]
It is then not difficult to check that the map $A \mapsto \phi(A)$ induces a bijection from this set 
to a set of representatives of the left cosets of $W_{\{x_i\}} = \{1,x_i\}$ in the dihedral group $W_{ij}$ of order $2m
$.  For example, if $m = 3$ then the edges $A_{ij}^1$, $C_{ij}^2$ and $A_{ij}^3$ have initial vertices the midpoints of $x_i^1$, $x_i^2$ and $x_i^3$, respectively, and we have $\phi(A_{ij}^1) = 1$, $\phi(C_{ij}^2) = y_j$ and $\phi(A_{ij}^3) = x_i y_j$, while the left cosets of $\{1,x_i\}$ in $W_{ij}$ are $\{1,x_i\}$, $\{y_j, y_j x_i\}$ and $\{ x_i y_j, x_i y_j x_i\}$.  Condition (2) then follows.  The argument is similar if $i(b)$ has type $\{y_j\}$.  

The final case is when $\s$ has type $\{x_i, y_j\}$ and $i(b)$ has type $\emptyset$.  Then by construction, the map $A \mapsto \phi(A)$ is a bijection from the set of composition edges 
\[
\{ \varepsilon_{ij}, \varepsilon'_{ij}\} \cup \{ \gamma^k_{ij}, \delta^k_{ij} \mid 1 \leq k \leq m-1 \}
\] 
to the elements of $W_{ij}$.  Thus in this case Condition (2) holds as well.  We have now constructed a covering of complexes of groups $\Psi:H(\cXop) \to G(\cK)$.

We conclude by showing that the covering $\Psi$ has $2m$ sheets.  For this, recall that the polygonal complex $X$ has $2m$ vertices. Let $\tau \in V(\cK)$ be the cone point of $K$.  Then the set $f^{-1}(\tau)$ consists of all vertices of $\cXop$ which have type $\emptyset$, and these are exactly the $2m$ vertices of $X$.   Since the local group $G_\tau$ is trivial, the number of sheets of $\Psi$ is $ |f^{-1}(\tau)| = 2m$, as required.
\end{proof}

Using Proposition~\ref{prop:CoveringW} and the same covering-theoretic results as in Section~\ref{sec:CoveringGamma} above, we obtain our second main result:

\EmbeddinginW*
 
We also note that by the last statement in Theorem~\ref{thm:Coverings}, the covering $\Psi$ induces an isomorphism between the universal covers of $H(\cXop)$ and $G(\cK)$.  Hence:

\begin{corollary}\label{daviscomplexisbuilding}  
Let $\widetilde X$ be the universal cover of the polygonal complex $X = X_{2m,L}$.  Then the barycentric subdivision of $\widetilde X$ is simplicially isomorphic to the Davis complex $\Sigma=\Sigma_{2m,L}$.
\end{corollary}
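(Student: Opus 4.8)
The plan is to read the result off directly from the isomorphism of universal covers supplied by Theorem~\ref{thm:Coverings}, together with the identifications recorded in the running sequence of examples. By Proposition~\ref{prop:CoveringW} there is a $2m$-sheeted covering of complexes of groups $\Psi:H(\cXop) \to G(\cK)$, and the final clause of Theorem~\ref{thm:Coverings} then provides an equivariant isomorphism of universal covers $\widetilde{H(\cXop)} \to \widetilde{G(\cK)}$, which is in particular an isomorphism of scwols. All that remains is to identify the two sides of this isomorphism and to pass from scwols to their geometric realizations.

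First I would identify the source. Since $H(\cXop)$ is the trivial complex of groups over the opposite scwol associated to $X$, Examples~\ref{egs:UniversalCover}(1) tells us that its universal cover $\widetilde{H(\cXop)}$ is the opposite scwol associated to the universal cover $\widetilde X$ of $X$. By the description in Examples~\ref{egs:Scwols}(1b), the geometric realization of this opposite scwol is exactly the barycentric subdivision of $\widetilde X$. Next I would identify the target: by Examples~\ref{egs:UniversalCover}(3), the geometric realization of $\widetilde{G(\cK)}$ is the Davis complex $\Sigma = \Sigma_{2m,L}$.

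Finally I would invoke functoriality of the geometric realization recalled in Section~\ref{sec:Scwols}: a non-degenerate morphism of (thin) scwols induces a simplicial map between the geometric realizations of the associated partially ordered sets, and hence an isomorphism of scwols induces a simplicial isomorphism of geometric realizations. Applying the geometric realization functor to the scwol isomorphism $\widetilde{H(\cXop)} \to \widetilde{G(\cK)}$ therefore yields a simplicial isomorphism from the barycentric subdivision of $\widetilde X$ to $\Sigma_{2m,L}$, which is precisely the assertion of the corollary.

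I do not expect a genuine obstacle here, as the substantive content — building the $2m$-sheeted covering in the opposite-scwol setting — has already been carried out in Proposition~\ref{prop:CoveringW}. The only point meriting a line of care is that Theorem~\ref{thm:Coverings} delivers its isomorphism at the level of scwols, so one must note explicitly that passing to geometric realizations preserves the isomorphism; but this is immediate from the functoriality of the realization construction, and requires no separate computation.
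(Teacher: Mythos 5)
Your proposal is correct and follows exactly the paper's own route: the paper deduces the corollary as an immediate consequence of Proposition~\ref{prop:CoveringW} together with the final statement of Theorem~\ref{thm:Coverings}, identifying $\widetilde{H(\cXop)}$ with the opposite scwol of $\widetilde X$ (whose realization is the barycentric subdivision of $\widetilde X$) and $\widetilde{G(\cK)}$ with the Davis complex via Examples~\ref{egs:UniversalCover}. Your extra remark about passing from the scwol isomorphism to a simplicial isomorphism of geometric realizations is a point the paper leaves implicit, but it is harmless and correct.
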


\section{Description as an amalgam of surface groups over free groups}\label{sec:Amalgam}

In our final section we describe $H$ as an amalgam of genus $(m-1)$ surface groups over rank $(m-1)$ free groups for the special case $L$ is a complete bipartite graph. 

\subsection{A presentation for the torsion-free group}\label{sec:Presentation}

Let $L = K_{q_1,q_2}$ with $q_1,q_2\geq 2$. First we obtain a group presentation for $H=H_{2m,L}$. Form a polygonal complex homotopic to $X=X_{2m,L}$ by contracting the polygon labeled
\[
x_{q_1}^1 y_{q_2}^1 x_{q_1}^2 y_{q_2}^2 \dots x_{q_1}^m y_{q_2}^m 
\]
to a single vertex. The resulting complex has a single vertex, allowing us to obtain a presentation for its fundamental group by reading off relations from the boundaries of the remaining 2-dimensional cells (see~\cite{Munk} Theorem 72.1). We have two types of relations. The first type correspond to those polygons in $X$ which didn't contain edges in common with the collapsed polygon. These are of the form
\[
x_{i}^1 y_{j}^1 x_{i}^2 y_{j}^2 \dots x_{i}^m y_{j}^m=1
\]
for $i\in \{1,2, \dots, q_1-1\}$, $j\in \{1,2, \dots, q_2-1\}$. The second type correspond to those polygons which contained contracted edges. After contracting, their edge labels are of the form 
\[
x_{i}^1 x_{i}^2 \dots x_{i}^m=1
\]
for $i\in \{1,2, \dots, q_1-1\}$, or 
\[
y_{j}^1 y_{j}^2 \dots y_{j}^m=1
\]
for $j\in \{1,2, \dots, q_2-1\}$. 

\begin{proposition}\label{prop:pres}
The group $H$ is generated by $x_i^1,x_i^2\dots,x_i^{m-1},y_j^1,y_j^2\dots,y_j^{m-1}$ subject to the following $(q_1 - 1)(q_2 - 1)$ many relations: 
\[
x_{i}^1 y_{j}^1 x_{i}^2 y_{j}^2 \dots x_{i}^{m-1} y_{j}^{m-1}    (x_{i}^{m-1})^{-1} (x_{i}^{m-2})^{-1} \dots (x_{i}^{1})^{-1}  (y_{j}^{m-1})^{-1} (y_{j}^{m-2})^{-1} \dots (y_{j}^{1})^{-1}=1
\] 
for $i\in \{1,2, \dots, q_1-1\}$, $j\in \{1,2, \dots, q_2-1\}$.
\end{proposition}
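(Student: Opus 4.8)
The plan is to take the presentation already read off from the collapsed complex in the discussion preceding the statement and to simplify it by Tietze transformations. Recall that collapsing the polygon $P_{q_1q_2}$ --- a contractible subcomplex, so that the quotient is homotopy equivalent to $X$ and has a single vertex --- allows Theorem~72.1 of~\cite{Munk} to present $H$ on the generators $x_i^k$ $(1 \leq i \leq q_1-1)$ and $y_j^k$ $(1 \leq j \leq q_2-1)$, each with $1 \leq k \leq m$, subject to the three families of relations isolated there: the $(q_1-1)(q_2-1)$ relations $x_i^1 y_j^1 \cdots x_i^m y_j^m = 1$, the $q_1-1$ relations $x_i^1 x_i^2 \cdots x_i^m = 1$, and the $q_2-1$ relations $y_j^1 y_j^2 \cdots y_j^m = 1$.

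The key point is that the last two families solve for the ``top'' generators. First I would rewrite $x_i^1 \cdots x_i^m = 1$ as $x_i^m = (x_i^{m-1})^{-1} (x_i^{m-2})^{-1} \cdots (x_i^1)^{-1}$ and $y_j^1 \cdots y_j^m = 1$ as $y_j^m = (y_j^{m-1})^{-1} (y_j^{m-2})^{-1} \cdots (y_j^1)^{-1}$. Since each $x_i^m$ occurs exactly once in its own relation from the second family (and otherwise only in first-family relations), a Tietze transformation deletes the generator $x_i^m$ together with that relation, substituting the displayed word for $x_i^m$ throughout; applying the same move to each $y_j^m$ removes those generators and the entire third family. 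The second and third families involve disjoint generator sets, so these eliminations do not interfere and may be carried out in either order. What survives is exactly the generating set $x_i^k, y_j^k$ with $1 \leq k \leq m-1$ (and $i \leq q_1 - 1$, $j \leq q_2-1$) of the statement, together with only the first family of relations, now rewritten.

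Performing the substitution in a single first-family relation gives
\[
x_i^1 y_j^1 \cdots x_i^{m-1} y_j^{m-1} (x_i^{m-1})^{-1} \cdots (x_i^1)^{-1} (y_j^{m-1})^{-1} \cdots (y_j^1)^{-1} = 1,
\]
which is precisely the relation asserted, and there are $(q_1-1)(q_2-1)$ of them. I do not anticipate a genuine obstacle: the argument is a routine generator elimination, and the only care needed is bookkeeping --- verifying that each top generator appears exactly once in a unique relation from which it can be solved, so that the Tietze moves are legitimate. The one external input worth recording explicitly is that collapsing $P_{q_1q_2}$ leaves $\pi_1$ unchanged, which holds because $P_{q_1q_2}$ is a contractible subcomplex of the CW complex $X$ and the inclusion of a subcomplex is a cofibration.
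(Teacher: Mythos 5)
Your proposal is correct and follows essentially the same route as the paper: collapse the polygon $x_{q_1}^1 y_{q_2}^1 \cdots x_{q_1}^m y_{q_2}^m$ to get a one-vertex complex, read off the three families of relations via Munkres' Theorem 72.1, then eliminate the generators $x_i^m$ and $y_j^m$ by solving the second and third families and substituting into the first. Your added care (noting the collapsed polygon is a contractible subcomplex, and making the Tietze moves explicit) only makes explicit what the paper leaves implicit.
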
 

\begin{proof}
Let us rearrange the relations of the second type to give
\[
x_{i}^m=(x_{i}^{m-1})^{-1} (x_{i}^{m-2})^{-1} \dots (x_{i}^{1})^{-1}
\]
for $i\in \{1,2, \dots, q_1-1\}$, and
\[
y_{j}^m=(y_{j}^{m-1})^{-1} (y_{j}^{m-2})^{-1} \dots (y_{j}^{1})^{-1}
\]
for $j\in \{1,2, \dots, q_2-1\}$. We can then substitute these expressions for $x_{i}^m$ and $y_{j}^m$ into the relations of the first type to obtain the required presentation.
\end{proof}

\subsection{Amalgams of surface groups}\label{sec:SurfaceGroups}

We now show that our presentation for $H$ can be recognized as a presentation of an amalgam of surface groups. 

For fixed $i\in \{1,2, \dots, q_1-1\}$ and $j\in \{1,2, \dots, q_2-1\}$, we denote the group generated by $x_{i}^1,x_{i}^2\dots,x_{i}^{m-1},y_{j}^1,y_{j}^2\dots,y_{j}^{m-1}$ subject to the single relation
\[
x_{i}^1 y_{j}^1 x_{i}^2 y_{j}^2 \dots x_{i}^{m-1} y_{j}^{m-1}    (x_{i}^{m-1})^{-1} (x_{i}^{m-2})^{-1} \dots (x_{i}^{1})^{-1}  (y_{j}^{m-1})^{-1} (y_{j}^{m-2})^{-1} \dots (y_{j}^{1})^{-1}=1
\]  
by $S_{m-1}\langle x_{i},y_{j}\rangle$. 

\begin{lemma}\label{lem:surface}
The group $S_{m-1}\langle x_{i},y_{j}\rangle$ is a surface group of genus $(m-1)$. 
\end{lemma}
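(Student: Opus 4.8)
The plan is to recognize the single defining relator as the boundary word of a polygon whose edge identifications produce a closed orientable surface, and then to read off the genus from the Euler characteristic. Throughout write $g = m-1$ and, to lighten notation, set $a_k = x_i^k$ and $b_k = y_j^k$ for $1 \le k \le g$, so that the relator defining $S_{m-1}\langle x_i, y_j\rangle$ becomes
\[
R = a_1 b_1 a_2 b_2 \cdots a_g b_g \, a_g^{-1} a_{g-1}^{-1} \cdots a_1^{-1}\, b_g^{-1} b_{g-1}^{-1}\cdots b_1^{-1}.
\]
The first, purely structural observation is that $R$ is a cyclically reduced word of length $4g$ in the $2g$ generators $a_1,\dots,a_g,b_1,\dots,b_g$, and that each of these generators occurs exactly twice in $R$, once with exponent $+1$ and once with exponent $-1$.

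First I would realize the presentation complex geometrically as a $4g$-gon $P$ whose boundary edges are labelled in cyclic order by the letters of $R$ and then glued in pairs according to these labels. Since every generator labels exactly two sides, each side of $P$ is identified with exactly one other side, and so by the standard theory underlying the classification of compact surfaces (see, e.g., \cite{Munk}) the quotient $F$ is a closed surface. Because each generator occurs once with exponent $+1$ and once with exponent $-1$, the polygon symbol meets the orientability criterion and $F$ is orientable; equivalently, $R$ abelianizes to $0$, so the abelianization of $S_{m-1}\langle x_i,y_j\rangle$ is the free abelian group $\mathbb{Z}^{2g}$, which already excludes the non-orientable case (whose abelianization carries $2$-torsion).

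The heart of the argument is to compute the CW data of $F$. It has one $2$-cell and, after the pairing, exactly $E = 2g$ edges; the crucial point is that all $4g$ vertices of $P$ are identified to a single point of $F$, so that $V = 1$. To see this, observe that each vertex of $P$ is incident to exactly two sides, each of which is glued to one further side, so there are precisely two identifications at every polygon vertex; hence the graph on the $4g$ polygon vertices recording all identifications is $2$-regular, its components are cycles, and $V$ equals the number of such cycles. I would then verify that this graph is a single $4g$-cycle by following the identifications around $P$, organized as an induction on $g$ (the cases $g=1,2,3$ are immediate direct checks). Granting $V=1$, the surface $F$ carries a CW structure with one vertex, $2g$ edges and one face whose attaching map reads $R$, so the standard presentation coming from this CW structure is exactly the one-relator presentation defining $S_{m-1}\langle x_i,y_j\rangle$, whence $\pi_1(F) \cong S_{m-1}\langle x_i,y_j\rangle$.

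Finally, $\chi(F) = V - E + 1 = 1 - 2g + 1 = 2 - 2g$, and an orientable closed surface with this Euler characteristic has genus $g = m-1$; therefore $S_{m-1}\langle x_i,y_j\rangle$ is the genus-$(m-1)$ surface group, as claimed. The main obstacle I anticipate is precisely the vertex count $V = 1$: the condition that each generator occur exactly twice does \emph{not} by itself force the one-relator group to be a surface group (without $V=1$ the CW structure on $F$ yields a different, smaller presentation), so the single-cycle computation is the genuine combinatorial content, and the bookkeeping of the $4g$ identifications is the delicate step. A purely algebraic alternative is to transform $R$ into the standard relator $\prod_{k=1}^{g}[a_k,b_k]$ by Tietze transformations (equivalently, by an automorphism of the free group on $a_1,\dots,b_g$); this carries the same content repackaged as the normal-form reduction in the classification of surfaces.
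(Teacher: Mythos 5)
Your proof is correct and takes essentially the same approach as the paper, whose entire argument is to write the relator on the boundary of a $4(m-1)$-gon and observe that the quotient is a genus-$(m-1)$ surface with a single vertex, from which the one-relator presentation is read off. You simply make explicit what the paper asserts without proof --- orientability, the Euler-characteristic computation, and crucially the single-vertex count $V=1$ (which you correctly flag as the real combinatorial content and which does hold for all $g$) --- so your write-up is a more detailed version of the same argument.
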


\begin{proof}
Write the group's relation on the boundary of a $4(m-1)$-gon. The corresponding quotient space is a genus $(m-1)$ surface with a single vertex, allowing us to obtain a presentation for its fundamental group by reading off a single relation from the boundary of its 2-dimensional cell. This gives the group $S_{m-1}\langle x_{i},y_{j}\rangle$.
\end{proof}

Let us denote the free group with basis $x_{i}^1, x_{i}^2,  \dots, x_{i}^{m-1}$ by $F_{m-1}\langle x_i\rangle$ and the free group with basis $y_{i}^1, y_{i}^2,  \dots, y_{i}^{m-1}$ by $F_{m-1}\langle y_j\rangle$. We denote the embedding $F_{m-1}\langle x_i\rangle \hookrightarrow S_{m-1}\langle x_i,y_j\rangle$ such that $x_i \mapsto x_i$ by $\iota_{m-1}(x_i,y_j)$, and the embedding $F_{m-1}\langle y_j\rangle \hookrightarrow S_{m-1}\langle x_i,y_j\rangle$ such that $y_j \mapsto y_j$ by $\iota_{m-1}(y_j,x_i)$. 

We now describe how to associate a scwol to a simplicial graph $\Theta$. Let $\Theta'$ denote the barycentric subdivision of $\Theta$. We associate to $\Theta$ the scwol $\cX=\cX(\Theta)$ such that $V(\cX):= V(\Theta')$ and $E(\cX) := E(\Theta')$. Each edge $a\in E(\cX)$ is then oriented so that $i(a)$ is a vertex of $\Theta$ and $t(a)$ is the barycenter of an edge of $\Theta$. Let us denote $\cX(K_{q_1-1,q_2-1})$ by $\cX_{q_1-1,q_2-1}$. Figure \ref{fig:Fig9} shows $\cX_{1,2}$.   

\begin{figure}
\centering
\begin{minipage}{.5\textwidth}
  \centering
  \includegraphics{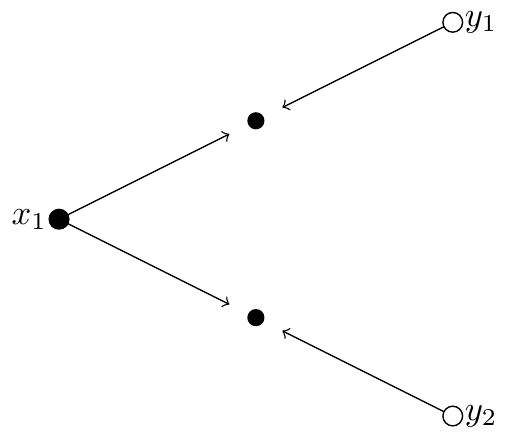}
  \caption{The scwol $\cX_{1,2}$}
  \label{fig:Fig9}
\end{minipage}%
\begin{minipage}{.5\textwidth}
  \centering
  \includegraphics{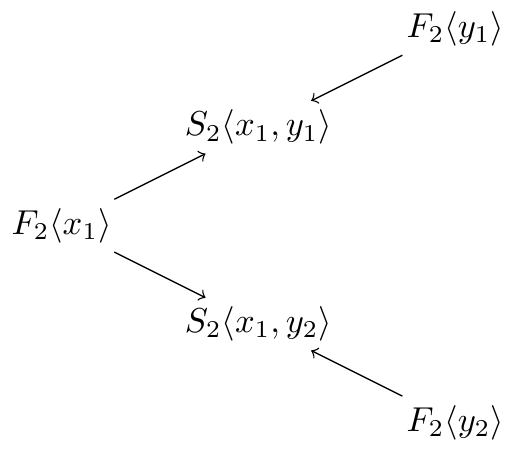}
  \caption{The complex of groups $G_{2}(\cX_{1,2})$} 
  \label{fig:Fig10}
\end{minipage}
\end{figure}

The scwol $\cX(\Theta)$ is an example of a 1-dimensional scwol (see Section \ref{sec:Scwols}), i.e. a scwol which has no composable pairs of edges. Scwols which are 1-dimensional have two kinds of vertices; \emph{sources} are initial vertices of edges and \emph{sinks} are terminal vertices of edges. Complexes of groups over 1-dimensional scwols are (equivalent to) graphs of groups in the sense of Bass-Serre~\cite{Serre}.  

We now use the groups $S_{m-1}\langle x_i,y_j\rangle$, $F_{m-1}\langle x_i\rangle$ and $F_{m-1}\langle y_i\rangle$ to construct a complex of groups $G_{m-1}(\cX) = (G_\s,\psi_a)$ over $\cX=\cX_{q_1-1,q_2-1}$. Our construction can probably be deduced from Figure \ref{fig:Fig10}, which shows $G_{2}(\cX_{1,2})$. 

Explicitly let $\s \in V(\cX)$. If $\s$ is the barycenter of $(x_i,y_j)$, the local group $G_\s$ is  $S_{m-1}\langle x_i,y_j\rangle$. If $\s$ is the vertex $x_i$, the local group $G_\s$ is $F_{m-1}\langle x_i\rangle$. If $\s$ is the vertex $y_j$, the local group $G_\s$ is $F_{m-1}\langle y_j\rangle$. Let $a \in E(\cX)$. If $i(a)=x_i$ and $t(a)=(x_i,y_j)$ then $\psi_a=\iota_{m-1}(x_i,y_j)$. If $i(a)=y_j$ and $t(a)=(x_i,y_j)$ then $\psi_a=\iota_{m-1}(y_j,x_i)$. We now prove our final main result.

\Amalgam* 

\begin{proof}
We claim the direct limit (i.e. the colimit) of $G_{m-1}(\cX)$ as a diagram of groups is isomorphic to $H$. Direct limits of diagrams of groups can easily be constructed by generators and relations (see~\cite{Serre}, p.1). Since $G_{m-1}(\cX)$ is a diagram over a 1-dimensional scwol which contains only embeddings, the construction can be simplified as follows. First one takes as generators and relations the disjoint union of sets of generators and relations for each $G_\s$ such that $\s \in V(\cX)$ is a sink. One then identifies each pair of generators $h,h'$ such that there exist $\psi_a$ and $\psi_b$ with $i(a)=i(b)=\s$, and $g\in G_\s$ with $\psi_a(g)=h$ and $\psi_b(g)=h'$. 

Each local group at a sink of $G_{m-1}(\cX)$ is a surface group $S_{m-1}\langle x_{i},y_{j}\rangle$ which comes equipped with the group presentation given above. By the way we have constructed our local groups, the effect of taking the disjoint union of generators and then identifying generators in the required manner is the same as simply taking their union. Hence the direct limit of $G_{m-1}(\cX)$ is the group generated by $x_{i}^1,x_{i}^2\dots,x_{i}^{m-1},y_{j}^1,y_{j}^2\dots,y_{j}^{m-1}$ subject to the relations 
\[
x_{i}^1 y_{j}^1 x_{i}^2 y_{j}^2 \dots x_{i}^{m-1} y_{j}^{m-1}    (x_{i}^{m-1})^{-1} (x_{i}^{m-2})^{-1} \dots (x_{i}^{1})^{-1}  (y_{j}^{m-1})^{-1} (y_{j}^{m-2})^{-1} \dots (y_{j}^{1})^{-1}=1
\]
for $i\in \{1,2, \dots, q_1-1\}$, $j\in \{1,2, \dots, q_2-1\}$. This coincides with the presentation of $H$ found in Proposition~\ref{prop:pres}.    
\end{proof} 

\subsection{Right-angled Artin groups}\label{sec:RAAGs}

If $m=2$ we recover a special case of the following construction. A \emph{right-angled Artin group} is a group $A_\Theta$ associated to a simplicial graph $\Theta$ in the following way: one takes the free group on the vertices of $\Theta$, and then quotients out the commutators of any adjacent vertices. The group $A_\Theta$ is then the direct limit of a diagram of groups over $\cX(\Theta)$, where sink groups are free abelian groups of rank 2, and source groups are infinite cyclic groups. Therefore if $A_\Theta$ is connected and contains at least two edges, then $A_\Theta$ is a non-trivial amalgam of free abelian groups of rank 2 over infinite cyclic groups. In this sense $H$, for $m\geq 3$, can be viewed as a generalization of right-angled Artin groups associated to complete bipartite graphs to higher genus.   

\subsection{Geometric amalgams}\label{sec:Geoamalgams}

In~\cite{Lafont} Lafont introduces a family of diagrams of free groups equipped with some geometric data, and a family of geodesic metric spaces such that the fundamental group functor $\pi_1$ induces a bijection between (the isomorphism classes of) the two families. The diagrams are 1-dimensional scwols such that sources have valency at least three, and are populated with groups by putting free groups of rank 1 on sources, and free groups of rank $\geq 2$ on sinks. The metric spaces are amalgams of hyperbolic surfaces-with-boundary over totally geodesic loops. Lafont proves $\pi_1$ rigidity for the metric spaces, giving ``diagram rigidity" (i.e. colimit rigidity) for the diagrams of free groups. 

In the spirit of Lafont one can construct a $K(G,1)$ for $H$ by gluing together $(q_1 - 1)(q_2 - 1)$ many $4(m-1)$-gons according to their labeling by our surface group relations. The resulting space is an amalgam of closed genus $(m-1)$ surfaces over bouquets of $(m-1)$ many loops.
 
\bibliography{Torsion-freesubgroups} 

\def\polhk#1{\setbox0=\hbox{#1}{\ooalign{\hidewidth
  \lower1.5ex\hbox{`}\hidewidth\crcr\unhbox0}}}
\begin{thebibliography}{10}

\bibitem{BallBrin}
{\sc W.~Ballmann and M.~Brin}, {\em Polygonal complexes and combinatorial group
  theory}, Geom. Dedicata, 50 (1994), pp.~165--191.

\bibitem{BL}
{\sc H.~Bass and A.~Lubotzky}, {\em Tree lattices}, vol.~176 of Progress in
  Mathematics, Birkh\"auser Boston, Inc., Boston, MA, 2001.
\newblock With appendices by Bass, L. Carbone, Lubotzky, G. Rosenberg and J.
  Tits.

\bibitem{Bjorner}
{\sc A.~Bj{\"o}rner}, {\em Posets, regular {CW} complexes and {B}ruhat order},
  European J. Combin., 5 (1984), pp.~7--16.

\bibitem{Bourdon97}
{\sc M.~Bourdon}, {\em Immeubles hyperboliques, dimension conforme et
  rigidit{\'e} de {M}ostow}, Geom. Funct. Anal., 7 (1997), pp.~245--268.

\bibitem{Bourdon}
{\sc M.~Bourdon and H.~Pajot}, {\em Rigidity of quasi-isometries for some
  hyperbolic buildings}, Comment. Math. Helv., 75 (2000), pp.~701--736.

\bibitem{BH}
{\sc M.~R. Bridson and A.~Haefliger}, {\em Metric spaces of non-positive
  curvature}, vol.~319 of Grundlehren der Mathematischen Wissenschaften
  [Fundamental Principles of Mathematical Sciences], Springer-Verlag, Berlin,
  1999.

\bibitem{Davis}
{\sc M.~W. Davis}, {\em The geometry and topology of {C}oxeter groups}, vol.~32
  of London Mathematical Society Monographs Series, Princeton University Press,
  Princeton, NJ, 2008.

\bibitem{FHT}
{\sc B.~Farb, C.~Hruska, and A.~Thomas}, {\em Problems on automorphism groups
  of nonpositively curved polyhedral complexes and their lattices}, in
  Geometry, rigidity, and group actions, Chicago Lectures in Math., Univ.
  Chicago Press, Chicago, IL, 2011, pp.~515--560.

\bibitem{Hae}
{\sc A.~Haefliger}, {\em Complexes of groups and orbihedra}, in Group theory
  from a geometrical viewpoint ({T}rieste, 1990), World Sci. Publ., River Edge,
  NJ, 1991, pp.~504--540.

\bibitem{Haglund2}
{\sc F.~Haglund}, {\em Les poly\`edres de {G}romov}, C. R. Acad. Sci. Paris
  S\'er. I Math., 313 (1991), pp.~603--606.

\bibitem{Haglund}
{\sc F.~Haglund}, {\em Commensurability and separability of quasiconvex
  subgroups}, Algebr. Geom. Topol., 6 (2006), pp.~949--1024.

\bibitem{HP}
{\sc F.~Haglund and F.~Paulin}, {\em Simplicit\'e de groupes d'automorphismes
  d'espaces \`a courbure n\'egative}, in The {E}pstein birthday schrift, vol.~1
  of Geom. Topol. Monogr., Geom. Topol. Publ., Coventry, 1998, pp.~181--248
  (electronic).

\bibitem{KN}
{\sc S.~Kobayashi and K.~Nomizu}, {\em Foundations of differential geometry.
  {V}ol. {II}}, Wiley Classics Library, John Wiley \& Sons, Inc., New York,
  1996.
\newblock Reprint of the 1969 original, A Wiley-Interscience Publication.

\bibitem{Lafont}
{\sc J.-F. Lafont}, {\em Diagram rigidity for geometric amalgamations of free
  groups}, J. Pure Appl. Algebra, 209 (2007), pp.~771--780.

\bibitem{Laz}
{\sc N.~Lazarovich}, {\em Uniqueness of homogeneous {CAT}(0) polygonal
  complexes}, Geom. Dedicata, 168 (2014), pp.~397--414.

\bibitem{LT}
{\sc S.~Lim and A.~Thomas}, {\em Covering theory for complexes of groups}, J.
  Pure Appl. Algebra, 212 (2008), pp.~1632--1663.

\bibitem{Munk}
{\sc J.~R. Munkres}, {\em Topology: a first course}, Prentice-Hall, Inc.,
  Englewood Cliffs, N.J., 1975.

\bibitem{Serre}
{\sc J.-P. Serre}, {\em Trees}, Springer Monographs in Mathematics,
  Springer-Verlag, Berlin, 2003.
\newblock Translated from the French original by John Stillwell, Corrected 2nd
  printing of the 1980 English translation.

\bibitem{Swi}
{\sc J.~{\'S}wi{\polhk{a}}tkowski}, {\em Trivalent polygonal complexes of
  nonpositive curvature and {P}latonic symmetry}, Geom. Dedicata, 70 (1998),
  pp.~87--110.

\bibitem{Wise}
{\sc D.~T. Wise}, {\em Non-positively curved squared complexes: {A}periodic
  tilings and non-residually finite groups}, ProQuest LLC, Ann Arbor, MI, 1996.
\newblock Thesis (Ph.D.)--Princeton University.

\end{thebibliography}

\end{document}